\theoremstyle{plain}
\newtheorem{theorem}{Theorem}[section]
\newtheorem{observation}[theorem]{Observation}
\newtheorem{corollary}[theorem]{Corollary}
\newtheorem{proposition}[theorem]{Proposition}
\newtheorem{lemma}[theorem]{Lemma}
\theoremstyle{definition}
\newtheorem{remark}[theorem]{Remark}
\def\NN{\mathbb{N}}
\def\QQ{\mathbb{Q}}
\def\ZZ{\mathbb{Z}}
\def\pet{\operatorname{pet}}
\newcommand{\sqbinom}[2]{\genfrac{[}{]}{0pt}{}{#1}{#2}}
\def\a{\mathbf{a}}
\def\x{\mathbf{x}}
\def\0{\mathbf{0}}
\def\bomega{\boldsymbol{\omega}}
\def\downstrut{\noindent\lower5pt\vbox{}}
\title{Cyclic Sieving of Multisets with Bounded Multiplicity and the Frobenius Coin Problem}
\author{Drew Armstrong}
\date{}
\begin{document}

\maketitle

\begin{abstract}
The two subjects in the title are related via the specialization of symmetric polynomials at roots of unity. Let $f(z_1,\ldots,z_n)\in\ZZ[z_1,\ldots,z_n]$ be a symmetric polynomial with integer coefficients and let $\omega$ be a primitive $d$th root of unity. If $d|n$ or $d|(n-1)$ then we have $f(1,\omega,\ldots,\omega^{n-1})\in\ZZ$. If $d|n$ then of course we have $f(\omega,\omega^2,\ldots,\omega^n)=f(1,\omega,\ldots,\omega^{n-1})\in\ZZ$, but when $d|(n+1)$ we also have $f(\omega,\omega^2,\ldots,\omega^n)\in\ZZ$. We investigate these three families of integers in the case $f=h_k^{(b)}$, where $h_k^{(b)}$ is the coefficient of $t^k$ in the generating function $\prod_{i=1}^n (1+z_it+\cdots+(z_it)^{b-1})$. These polynomials were previously considered by several authors. They interpolate between the elementary symmetric polynomials ($b=2$) and the complete homogeneous symmetric polynomials ($b\to\infty$). When $\gcd(b,d)=1$ with $d|n$ or $d|(n-1)$ we find that the integers $h_k^{(b)}(1,\omega,\ldots,\omega^{n-1})$ are related to cyclic sieving of multisets with multiplicities bounded above by $b$, generalizing the well-known cyclic sieving results for sets ($b=2$) and multisets ($b\to \infty$). When $\gcd(b,d)=1$ and $d|(n+1)$ we find that the integers $h_k^{(b)}(\omega,\omega^2,\ldots,\omega^n)$ are related to the Frobenius coin problem with two coins. The case $\gcd(b,d)\neq 1$ is more complicated. At the end of the paper we combine these results with the expansion of $h_k^{(b)}$ in various bases of the ring of symmetric polynomials.
\end{abstract}

\tableofcontents

\section{Summary}\label{sec:summary}

We begin with a brief summary of our main results. Let the numbers $\binom{n}{k}^{(b)}\in\NN$ be defined by
\begin{equation*}
\sum_k \binom{n}{k}^{(b)} t^k =(1+t+\cdots+t^{b-1})^n
\end{equation*}
and let the polynomials $\sqbinom{n}{k}_q^{(b)}\in\NN[q]$ be defined by
\begin{equation*}
\sum_k \sqbinom{n}{k}^{(b)}_q t^k = \prod_{i=0}^{n-1} (1+q^i t+\cdots+(q^it)^{b-1}).
\end{equation*}
If $n$ or $k$ is not an integer we define $\binom{n}{k}^{(b)}:=0$ and $\sqbinom{n}{k}_q^{(b)}:=0$.

Let $\omega$ be a primitive $d$th root of unity. It is a basic fact (see Observation \ref{obs:galois} and Remark \ref{rem:elementary}) that $\sqbinom{n}{k}_\omega^{(b)}$ is an integer when $d|n$ or $d|(n-1)$, and that $\omega^k \sqbinom{n}{k}_\omega^{(b)}$ is an integer when $d|(n+1)$. We are interested in combinatorial interpretations of these integers.

If $d|n$ and $\gcd(b,d)=1$ then we have (Theorem \ref{thm:main}a):
\begin{equation*}
\sum_k \sqbinom{n}{k}^{(b)}_\omega t^k = (1+t^d+\cdots+(t^d)^{b-1})^{n/d},
\end{equation*}
which implies that
\begin{equation*}
\sqbinom{n}{k}_\omega^{(b)}=\binom{n/d}{k/d}^{(b)}.
\end{equation*}
Let $X$ be the set of $k$-multisubsets from $\{1,\ldots,n\}$ whose multiplicities are less than $b$. It is easy to check (see Theorem \ref{thm:csp}a) that $\binom{n/d}{k/d}^{(b)}$ is also equal to the number of elements of $X$ fixed by the permutation $\rho$ of $\{1,\ldots,n\}$ defined by $i\mapsto i+n/d$ mod $n$. The situation for $\gcd(b,d)\neq 1$ is more complicated.

If $d|(n-1)$ and $\gcd(b,d)=1$ then we have (Theorem \ref{thm:main}b):
\begin{equation*}
\sum_k  \sqbinom{n}{k}^{(b)}_\omega t^k = (1+t+\cdots+t^{b-1})(1+t^d+\cdots+(t^d)^{b-1})^{(n-1)/d},
\end{equation*}
which implies that
\begin{equation*}
\sqbinom{n}{k}_\omega^{(b)} = \sum_{\ell=0}^{b-1} \binom{(n-1)/d}{(k-\ell)/d}^{(b)}.
\end{equation*}
It is easy to check (see Theorem \ref{thm:csp}b) that this number also counts the elements of $X$ fixed by the permutation $\tau:\{1,\ldots,n\}$ defined by $n\mapsto n$ and $i\mapsto i+(n-1)/d$ mod $n-1$ for $i\in\{1,\ldots,n-1\}$. The situation for $\gcd(b,d)\neq 1$ is more complicated. These results generalize the two most basic examples of cyclic sieving \cite[Theorem 1.1]{rsw}, which correspond to the cases $b=2$ (subsets) and $b\to\infty$ (unbounded multisets).

If $d|(n+1)$ and $\gcd(b,d)=1$ then we have (Theorem \ref{thm:main}c):
\begin{equation*}
\sum_k \omega^k \sqbinom{n}{k}^{(b)}_\omega t^k = \frac{(1+t^d+\cdots+(t^d)^{b-1})^{(n+1)/d}}{1+t+\cdots+t^{b-1}},
\end{equation*}
which is a polynomial in $\ZZ[t]$. This time we do not have an interpretation in terms of cyclic actions on multisets. Instead, we find that the coefficients are related to the Frobenius coin problem. As a teaser, we mention one interesting result (Corollary \ref{cor:interesting}). Let $(\delta_0,\ldots,\delta_{d-1})$ be the permutation of $(0,\ldots,d-1)$ defined by $\delta_r b\equiv r$ mod $d$. Then for all $r\in\{0,\ldots,d-1\}$ we have
\begin{equation*}
\sum_{k\geq 0} \omega^{r+kd}\sqbinom{n}{r+kd}^{(b)}_\omega t^{r+kd} = \varepsilon_r f_r(t^d),
\end{equation*}
where $f_r(t)\in\NN[t]$ is a polynomial with positive, unimodal coefficients, and where
\begin{equation*}
\varepsilon_r = \begin{cases} +1 & \delta_r<\delta_1, \\ -1 & \delta_r\geq \delta_1.\end{cases}
\end{equation*}
The situation for $\gcd(b,d)\neq 1$ is more complicated.

At the very end of this project we stumbled upon a surprising formula. Recall that a partition $\lambda$ of length $\ell=l(\lambda)$ is a sequence $\lambda=(\lambda_1,\ldots,\lambda_\ell)\in\NN^\ell$ where $\lambda_1\geq \cdots\geq \lambda_\ell\geq 1$. We write $|\lambda|=k$ when $\sum_i \lambda_i=k$. Let $m_i=\#\{j:\lambda_j=i\}$ be the number of parts of $\lambda$ equal to $i$ and let $z_\lambda=\prod_i i^{m_i} \cdot m_i!$. For each $b\geq 2$ let $l_b(\lambda)=\#\{j: b|\lambda_j \}$ be the number of parts of $\lambda$ divisible by $b$. Then for all $n,k,b\in\NN$ we have (Proposition \ref{prop:hbpow})
\begin{equation*}
\binom{n}{k}^{(b)} = \sum_{|\lambda|= k} z_\lambda^{-1} (1-b)^{l_b(\lambda)} n^{l(\lambda)}.
\end{equation*} 
We obtain this as the specialization of an identity involving symmetric polynomials. Corollary \ref{cor:hbpow} gives the analogous specializations at roots of unity.

The paper is organized as follows. Section \ref{sec:csp} discusses the cyclic sieving phenomenon. In this section we state and partially prove Theorems \ref{thm:csp}a and \ref{thm:csp}b on cyclic sieving of multisets with bounded multiplicity. In Section \ref{sec:specialization} we discuss the $b$-bounded symmetric polynomials and their specializations at roots of unity. The main result is Theorem \ref{thm:main}. We use parts (a) and (b) of Theorem \ref{thm:main} to complete the proof of Theorem \ref{thm:csp}. Part (c) of Theorem \ref{thm:main} is not related to cyclic sieving. Instead, we show that it is related to the Frobenius coin problem, which we discuss in Section \ref{sec:sylvester}. Finally, in Section \ref{sec:other} we discuss the expansion of $b$-bounded symmetric polynomials in various bases for the ring of symmetric polynomials. In particular, the expansion in power sums yields the surprising formula.

\section{Cyclic Sieving}\label{sec:csp}

This investigation began with the experimental observation (see Theorem \ref{thm:csp}a) that certain collections of multisets with bounded multiplicity obey a cyclic sieving property. We recall the setup from Reiner, Stanton and White \cite{rsw}.\footnote{See \cite{sagan} for a more recent survey.} Suppose that a cyclic group $\langle \rho\rangle$ acts on a finite set $X$. Let $f(q)=\sum_{x\in X} q^{\text{stat}(x)}$ be the generating polynomial for a certain function $\text{stat}:X\to \NN$, so $f(1)=\#X$. Let $N=\#\langle\rho\rangle$. We say that the triple $(X, f(q), \langle \rho\rangle)$ exhibits the {\em cyclic sieving phenomenon} (CSP) if for any primitive $d$th root of unity $\omega$ with $d|N$ we have
\begin{equation*}
f(\omega)=\#X^{\rho^{N/d}} :=\#\{x\in X:\rho^{N/d}\cdot x=x\}.
\end{equation*}
Reiner, Stanton and White observed that this concept unifies many disparate examples in which a combinatorially defined polynomial $f(q)$ yields integers when evaluated at certain roots of unity. The most basic examples of CSP occur when $f(q)$ is a $q$-binomial coefficient, which we define as usual:
\begin{equation*}
[n]_q = 1+q+\cdots+q^{n-1},\,\,  [n]_q! = [n]_q[n-1]_q\cdots [1]_q  \,\,\text{ and }\,\, \sqbinom{n}{k}_q = \frac{[n]_q!}{[k]_q![n-k]_q!}.
\end{equation*}
It is well-known that $\sqbinom{n}{k}_q$ is a polynomial with positive integer coefficients. A {\em $k$-multisubset} of $\{1,\ldots,n\}$ has the form
\begin{equation*}
\{1^{x_1},2^{x_2},\ldots,n^{x_n}\} = \{\underbrace{1,\ldots,1}_{\text{$x_1$ times}},\underbrace{2,\ldots,2}_{\text{$x_2$ times}}\ldots,\underbrace{n,\ldots,n}_{\text{$x_n$ times}}\},
\end{equation*}
where the symbol $i$ is repeated $x_i$ times and where $x_1+\cdots+x_n=k$. We can identify each multiset with its vector of multiplicities $\x=(x_1,\ldots,x_n)\in\NN^n$. Subsets without repetition correspond to binary vectors $(x_1,\ldots,x_n)\in\{0,1\}^n$. More generally, for any integers $b,n,k\in\NN$ we consider $k$-multisubsets with multiplicities bounded above by $b$:
\begin{align}
X_b^n &:=  \{(x_1,\ldots,x_n)\in\NN^n: x_i<b \text{ for all $i$}\}, \nonumber  \\
X_b^n[k] &:= \{(x_1,\ldots,x_n)\in\NN^n: x_i<b \text{ for all $i$ and } \sum_i x_i=k\}. \label{eq:xset}
\end{align}
The notation is suggested by the fact that $\#X_b^n = b^n$. Note that $X_b^n[k]=X_{k+1}^n[k]$ for all $b\geq k$ hence we may define $X_\infty^n[k]:=X_{k+1}^n[k]$. Thus $X_2^n[k]$ is the set of $k$-subsets and $X_\infty^n[k]$ is the set of $k$-multisubsets of $\{1,\ldots,n\}$. We have
\begin{align*}
\#X_2^n[k] &= \binom{n}{k},\\
\#X_\infty^n[k] &= \binom{n-1+k}{k}.
\end{align*}
Consider the permutations $\rho,\tau:X_b^n[k]\to X_b^n[k]$ where $\rho$ cycles all $n$ coordinates and $\tau$ cycles the first $n-1$ coordinates, leaving the $n$th coordinate fixed:
\begin{align*}
\rho\cdot (x_1,x_2,\ldots,x_n) &:= (x_2,\ldots,x_n,x_1),\\
\tau\cdot (x_1,x_2,\ldots,x_n) &:= (x_2,\ldots,x_{n-1},x_1,x_n).
\end{align*}
Here is the most basic result in the theory of cyclic sieving.

\begin{proposition}[{\cite[Theorem 1.1]{rsw}}]\label{prop:csp}
If $C=\langle \rho\rangle$ or $C=\langle\tau\rangle$ then the following exhibit CSP:
\begin{equation*}
\left( X_2^n[k], \sqbinom{n}{k}_q, C\right), \quad  \left( X_\infty^n[k], \sqbinom{n-1+k}{k}_q, C\right).
\end{equation*}
In other words, if $\omega$ is a primitive $d$th root of unity with $d|n$ then we have
\begin{align*}
\sqbinom{n}{k}_\omega &= \#X_2^n[k]^{\rho^{n/d}},\\
\sqbinom{n-1+k}{k}_\omega &=\#X_\infty^n[k]^{\rho^{n/d}}.
\end{align*}
and if $\omega$ is a primitive $d$th root of unity with $d|(n-1)$ then we have
\begin{align*}
\sqbinom{n}{k}_\omega &= \#X_2^n[k]^{\tau^{(n-1)/d}},\\
\sqbinom{n-1+k}{k}_\omega &=\#X_\infty^n[k]^{\tau^{(n-1)/d}}.
\end{align*}
\end{proposition}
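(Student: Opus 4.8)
The plan is to treat the two ingredients of the cyclic sieving phenomenon separately and then match them: on one side I would evaluate the $q$-binomial coefficients at $\omega$, and on the other I would enumerate the relevant fixed-point sets directly by a periodicity argument. Since $\langle\rho\rangle$ and $\langle\tau\rangle$ are both cyclic and the four assertions have the same shape, I would organize the proof as a product computation on the polynomial side together with a ``blocks of length $d$'' count on the combinatorial side, and then check that the two sides agree in each of the four cases.

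For the fixed-point counts I would argue as follows. Since $\rho$ has order $n$, the element $\rho^{n/d}$ has order $d$, and $\x\in X_b^n[k]$ is fixed by it exactly when $x_i=x_{i+n/d}$ for all $i$, i.e. when $\x$ is periodic with period $n/d$. Such an $\x$ is determined by its first $n/d$ coordinates, and its total weight is $d$ times the weight of that initial block; hence there are no fixed points unless $d\mid k$, and when $d\mid k$ the fixed points are in bijection with the elements of $X_b^{n/d}[k/d]$. Taking $b=2$ gives $\binom{n/d}{k/d}$ and $b\to\infty$ gives $\binom{n/d-1+k/d}{k/d}$. The analysis of $\tau^{(n-1)/d}$ is identical on the first $n-1$ coordinates, except that the $n$th coordinate is now free: writing the weight of the periodic block as $s$ and the last coordinate as $x_n$, one needs $ds+x_n=k$. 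For $b=2$ this forces $x_n\in\{0,1\}$ and yields $\sum_{\ell=0}^{1}\binom{(n-1)/d}{(k-\ell)/d}$, while for $b\to\infty$ one sums $\binom{(n-1)/d-1+s}{s}$ over $0\le s\le\lfloor k/d\rfloor$ and collapses the sum by the hockey-stick identity to $\binom{(n-1)/d+\lfloor k/d\rfloor}{\lfloor k/d\rfloor}$.

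On the polynomial side I would use the fact that $[m]_\omega=0$ precisely when $d\mid m$. Writing $\sqbinom{n}{k}_q=[n]_q!/([k]_q![n-k]_q!)$, the order of vanishing at $q=\omega$ is $\lfloor n/d\rfloor-\lfloor k/d\rfloor-\lfloor(n-k)/d\rfloor$, which lies in $\{0,1\}$ and equals $0$ exactly when adding $k$ and $n-k$ in base $d$ produces no carry; when it is $0$ the surviving factors pair up as $[id]_q/[jd]_q=[i]_{q^d}/[j]_{q^d}\to i/j$, reducing the evaluation to an ordinary binomial coefficient, and when it is $1$ the value is $0$. Carrying this out for $\sqbinom{n}{k}_\omega$ and $\sqbinom{n-1+k}{k}_\omega$ in the two cases $d\mid n$ and $d\mid(n-1)$ reproduces exactly the four fixed-point counts above; equivalently one may quote the $q$-Lucas theorem $\sqbinom{n}{k}_\omega=\binom{n_1}{k_1}\sqbinom{n_0}{k_0}_\omega$ in terms of base-$d$ digits and read off the same values.

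The computations are routine once set up; the step I expect to be the main obstacle is the multiset evaluation, where the ``$-1$'' in $n-1+k$ shifts the base-$d$ digits. Concretely, under $\rho$ with $d\mid n$ and $d\mid k$ the top $n-1+k$ has low digit $d-1$ rather than $0$, so the vanishing orders still balance and the evaluation returns $\binom{n/d-1+k/d}{k/d}$; under $\tau$ with $d\mid(n-1)$ the low digits of $n-1+k$ and $k$ coincide, giving $\binom{(n-1)/d+\lfloor k/d\rfloor}{\lfloor k/d\rfloor}$, which is precisely the hockey-stick value produced by the fixed-point count. Reconciling these shifted evaluations with the combinatorial side is the crux of the argument; the subset cases are the direct specialization and require no such care.
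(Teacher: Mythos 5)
Your proof is correct, but it is worth noting that the paper does not actually prove this proposition itself: it is quoted from Reiner--Stanton--White, and the paper instead proves the generalization to bounded multisets (Theorem \ref{thm:csp}), from which the $b=\infty$ case of the proposition follows directly and the $b=2$ case follows after some checking of signs (and, for even $d$, a separate argument). Your combinatorial half --- counting fixed points of $\rho^{n/d}$ and $\tau^{(n-1)/d}$ via periodicity, with the free last coordinate under $\tau$ and the hockey-stick collapse in the multiset case --- is essentially identical to the combinatorial portion of the paper's proof of Theorem \ref{thm:csp}. Your algebraic half is where the routes diverge: you evaluate $\sqbinom{n}{k}_\omega$ and $\sqbinom{n-1+k}{k}_\omega$ directly by tracking vanishing orders of the $[m]_q$ factors (equivalently by $q$-Lucas), which is the ``brute force'' second proof of \cite{rsw} that the paper explicitly mentions and deliberately avoids; the paper instead evaluates the full generating function $H^{(b)}(t;1,\omega,\ldots,\omega^{n-1})=\prod_j(1-\omega^{jb}t^b)/(1-\omega^jt)$ at roots of unity (Theorem \ref{thm:main}), which handles all $b$ and all congruence classes of $k$ uniformly and avoids the digit-shifting bookkeeping you correctly identify as the delicate step. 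Your approach buys a self-contained, elementary proof of exactly this proposition; the paper's approach buys the generalization to all $b$ with $\gcd(b,d)=1$ and the $d\mid(n+1)$ case at essentially no extra cost. Your digit computations (low digit $d-1$ for $n-1+k$ when $d\mid n$ and $d\mid k$; coinciding low digits when $d\mid(n-1)$) are all accurate.
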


In general the set of vectors $X_b^n[k]$ corresponds to $k$-multisubsets of $\{1,\ldots,n\}$ whose multiplicities are bounded above by $b$. We will use the notation
\begin{equation*}
\binom{n}{k}^{(b)}=\#X_b^n[k],
\end{equation*}
so that
\begin{equation*}
\binom{n}{k}^{(2)}=\binom{n}{k} \quad \text{ and } \quad \binom{n}{k}^{(\infty)}=\binom{n-1+k}{k}.
\end{equation*}
The generating function is
\begin{equation*}
\sum_{k=0}^{(b-1)n} \binom{n}{k}^{(b)} t^k = (1+t+\cdots+t^{b-1})^n.
\end{equation*}
These numbers have a long history and appear often in probability because $\binom{n}{k}^{(b)}/b^n$ is the chance of getting a sum of $k$ in $n$ rolls of a fair die with sides labeled $\{0,1,\ldots,b-1\}$. Belbachir and Igueroufa \cite{belbachir} have collected an extensive list of references going back to Euler. It is surprising that there is no standard notation; we use a modified form of Euler's notation.

Our main results in this paper have to do with the natural $q$-analogue defined by
\begin{equation*}
\sum_{k=0}^{(b-1)n} \sqbinom{n}{k}_q^{(b)} t^k = \prod_{i=0}^{n-1} (1+q^it+(q^it)^2+\cdots+(q^i t)^{b-1}) = \prod_{i=0}^{n-1} [b]_{q^i t},
\end{equation*}
which is the generating function for the statistic $x_2+2x_3+\cdots+(n-1)x_n$ on the set $X_b^n[k]$:
\begin{equation*}
\sqbinom{n}{k}_q^{(b)} = \sum_{\x\in X_b^n[k]} q^{x_2+2x_3+\cdots+(n-1)x_n}.
\end{equation*}
This is a generalization of the $q$-binomial coefficients in the sense that
\begin{align*}
\sqbinom{n}{k}_q^{(2)} &= q^{k(k-1)/2} \sqbinom{n}{k}_q,\\
\sqbinom{n}{k}_q^{(\infty)} &= \sqbinom{n-1+k}{k}_q.
\end{align*}
Here is the result that motivated the current paper.

\begin{theorem}[Cyclic Sieving of Multisets with Bounded Multiplicity]\label{thm:csp} Let $X_b^n[k]$ be the set of $n$-tuples $(x_1,\ldots,x_n)\in \{0,1,\ldots,b-1\}^n$ with $x_1+\cdots+x_n=k$ and consider the permutations $\rho,\tau$ of $X_b^n[k]$ defined by
\begin{align*}
\rho\cdot (x_1,x_2,\ldots,x_n) &:= (x_2,\ldots,x_n,x_1),\\
\tau\cdot (x_1,x_2,\ldots,x_n) &:= (x_2,\ldots,x_{n-1},x_1,x_n).
\end{align*}
\begin{enumerate}
\item If $\gcd(b,n)=1$ then the triple $(X_b^n[k], \sqbinom{n}{k}_q^{(b)},\langle \rho\rangle)$ exhibits the CSP. More precisely, if $\omega$ is a primitive $d$th root of unity with $d|n$ and $\gcd(b,d)=1$ then we have
\begin{equation*}
\sqbinom{n}{k}_\omega^{(b)} = \#X_b^n[k]^{\rho^{n/d}}.
\end{equation*}
\item If $\gcd(b,n-1)=1$ then the triple $(X_b^n[k], \sqbinom{n}{k}_q^{(b)},\langle \tau\rangle)$ exhibits the CSP. More precisely, if $\omega$ is a primitive $d$th root of unity with $d|(n-1)$ and $\gcd(b,d)=1$ then we have
\begin{equation*}
\sqbinom{n}{k}_\omega^{(b)} = \#X_b^n[k]^{\tau^{(n-1)/d}}.
\end{equation*}
\end{enumerate}
\end{theorem}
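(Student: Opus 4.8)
The plan is to establish the two displayed identities
\[
\sqbinom{n}{k}_\omega^{(b)} = \#X_b^n[k]^{\rho^{n/d}} \quad\text{and}\quad \sqbinom{n}{k}_\omega^{(b)} = \#X_b^n[k]^{\tau^{(n-1)/d}}
\]
by computing each side independently and matching both against a common explicit count. The left-hand (algebraic) side is delivered by Theorem \ref{thm:main}: part (a) gives $\sqbinom{n}{k}_\omega^{(b)} = \binom{n/d}{k/d}^{(b)}$ when $d\mid n$ and $\gcd(b,d)=1$, while part (b) gives $\sqbinom{n}{k}_\omega^{(b)} = \sum_{\ell=0}^{b-1}\binom{(n-1)/d}{(k-\ell)/d}^{(b)}$ when $d\mid(n-1)$ and $\gcd(b,d)=1$. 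The work remaining in this section is therefore purely combinatorial: to show that the fixed-point counts on the right agree with these numbers. I would also remark at the outset that once the precise identities hold for every $d$ meeting the gcd hypothesis, the full CSP follows, since the global assumption $\gcd(b,n)=1$ (resp.\ $\gcd(b,n-1)=1$) forces $\gcd(b,d)=1$ for \emph{every} divisor $d$ of the group order $\#\langle\rho\rangle=n$ (resp.\ $\#\langle\tau\rangle=n-1$).

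For part (a), I would analyze the orbit structure of the position shift induced by $\rho^{n/d}$. Because $(\rho^{n/d}\cdot\x)_i = x_{i+n/d}$ with indices read mod $n$, a vector $\x$ is fixed exactly when it is constant on the orbits of the shift $i\mapsto i+n/d$ on $\ZZ/n\ZZ$. This shift generates a subgroup of order $d$, so its orbits are the $n/d$ cosets, each of size $d$. A fixed vector is thus determined by choosing one value in $\{0,\ldots,b-1\}$ per orbit, and an orbit carrying value $v$ contributes $dv$ to the coordinate sum. Hence the total sum is $d$ times the sum of the orbit values, which equals $k$ iff $d\mid k$ and the orbit values sum to $k/d$. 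The number of such assignments is exactly $\#X_b^{n/d}[k/d]=\binom{n/d}{k/d}^{(b)}$, which is $0$ when $d\nmid k$, consistent with the absence of fixed points in that case. This matches Theorem \ref{thm:main}a.

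For part (b), the $n$th coordinate is fixed by $\tau$, so I would split off its contribution first. A vector is fixed by $\tau^{(n-1)/d}$ iff $x_n$ is an arbitrary value $\ell\in\{0,\ldots,b-1\}$ and the first $n-1$ coordinates are constant on the orbits of $i\mapsto i+(n-1)/d$ on $\ZZ/(n-1)\ZZ$; as before there are $(n-1)/d$ such orbits, each of size $d$. Fixing $x_n=\ell$, the remaining coordinate sum is $d$ times the sum of the orbit values, so the full sum equals $k$ iff the orbit values sum to $(k-\ell)/d$, requiring $d\mid(k-\ell)$. Counting the orbit-value assignments for each $\ell$ and summing yields $\sum_{\ell=0}^{b-1}\binom{(n-1)/d}{(k-\ell)/d}^{(b)}$, matching Theorem \ref{thm:main}b.

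The genuinely hard part lives outside this section: it is the algebraic evaluation of $\sqbinom{n}{k}_\omega^{(b)}$ at a root of unity, i.e.\ Theorem \ref{thm:main}, which supplies the closed forms above through the factorization of $\prod_{i=0}^{n-1}[b]_{q^i t}$ at $q=\omega$ and the full machinery of $b$-bounded symmetric polynomials. By contrast, the fixed-point count here is elementary orbit bookkeeping in which the coprimality hypothesis $\gcd(b,d)=1$ plays no visible role; it enters only through Theorem \ref{thm:main}, where it guarantees the clean factorization. Thus this section supplies the \emph{partial} (combinatorial) half of the proof, which is then completed by invoking Theorem \ref{thm:main}.
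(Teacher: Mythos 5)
Your proposal is correct and follows essentially the same route as the paper: the fixed-point counts are computed by observing that a fixed vector is constant on the orbits (equivalently, is a $d$-fold repetition of a shorter string, with $x_n$ split off in the $\tau$ case), and the algebraic side is deferred entirely to Theorem \ref{thm:main}(a,b). Your added remark that $\gcd(b,n)=1$ forces $\gcd(b,d)=1$ for every divisor $d$ is a correct and worthwhile clarification of why the precise identities imply the full CSP statement.
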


Here we will give the combinatorial portion of the proof. The algebraic portion of the proof follows from parts (a) and (b) of Theorem \ref{thm:main} in the next section.

\begin{proof}
Let $\omega$ be a primitive $d$th root of unity.

First suppose that $d|n$. If $d\nmid k$ then $X_b^n[k]^{\rho^{n/d}}=\emptyset$. If $d|k$ then we have a bijection $X_b^{n/d}[k/d]\to X_b^n[k]^{\rho^{n/d}}$ defined by 
\begin{equation*}
(x_1,\ldots,x_{n/d})\mapsto (x_1,\ldots,x_{n/d},\ldots,x_1,\ldots,x_{n/d}),
\end{equation*}
where the string $(x_1,\ldots,x_{n/d})$ is repeated $d$ times on the right hand side. Hence we have
\begin{equation*}
\#X_b^n[k]^{\rho^{n/d}}=\#X_b^{n/d}[k/d] = \binom{n/d}{k/d}^{(b)}.
\end{equation*}
In other words, we have the generating function
\begin{equation*}
\sum_k \#X_b^n[k]^{\rho^{n/d}}t^k=(1+t^d+\cdots+(t^d)^{b-1})^{n/d}.
\end{equation*}
Theorem \ref{thm:main}a below shows that the numbers $\sqbinom{n}{k}_\omega^{(b)}$ satisfy the same generating function, but only when $\gcd(b,d)=1$.

Next suppose that $d|(n-1)$. Any $\x\in X_b^n[k]^{\tau^{(n-1)/d}}$ has the form
\begin{equation*}
(x_1,\ldots,x_n) = (x_1,\ldots,x_{(n-1)/d},\ldots,x_1,\ldots,x_{(n-1)/d},x_n),
\end{equation*}
where the string $(x_1,\ldots,x_{(n-1)/d})$ is repeated $d$ times. Since the coordinates sum to $k$ we have $x_n+d \sum_{i=1}^{(n-1)/d} x_i=k$. If $x_n=\ell$ then $x_1,\ldots,x_{(n-1)/d}$ sum to $(k-\ell)/d$, hence there are $\binom{(n-1)/d}{(k-\ell)/d}^{(b)}$ ways to choose $x_1,\ldots,x_{(n-1)/d}$. It follows that
\begin{equation*}
\#X_b^n[k]^{\tau^{(n-1)/d}}=\sum_{\ell\geq 0} \binom{(n-1)/d}{(k-\ell)/d}^{(b)}.
\end{equation*}
In other words, we have the generating function
\begin{equation*}
\sum_k \#X_b^n[k]^{\tau^{(n-1)/d}}t^k=(1+t+\cdots+t^{b-1})(1+t^d+\cdots+(t^d)^{b-1})^{(n-1)/d}.
\end{equation*}
Theorem \ref{thm:main}b below shows that the numbers $\sqbinom{n}{k}_\omega^{(b)}$ satisfy the same generating function, but only when $\gcd(b,d)=1$.
\end{proof}

Let us observe how Theorem \ref{thm:csp} generalizes Proposition \ref{prop:csp}. Let $\omega$ be a primitive $d$th root of unity. If $b$ is larger than $k$ (and coprime to $d$) then we have
\begin{equation*}
\sqbinom{n}{k}^{(b)}_\omega =\sqbinom{n}{k}^{(\infty)}_\omega = \sqbinom{n-1+k}{k}_\omega,
\end{equation*}
so in this case we recover the multiset portion of Proposition \ref{prop:csp}. If $b=2$ then Theorem \ref{thm:csp} only applies when $d$ is odd, i.e., when $\gcd(2,d)=1$. In this case one can show with some tedious checking of signs and vanishing conditions that
\begin{equation*}
\sqbinom{n}{k}_\omega^{(2)}=\omega^{k(k-1)/2}\sqbinom{n}{k}_\omega=\sqbinom{n}{k}_\omega
\end{equation*}
whenever $d|n$ or $d|(n-1)$.\footnote{We regard this fact as a small coincidence. In general, it seems that one should just admit that $\sqbinom{n}{k}_q^{(2)}=q^{k(k-1)/2}\sqbinom{n}{k}_q$ is the appropriate $q$-analogue of binomial coefficients when dealing with sets, while $\sqbinom{n}{k}_q^{(\infty)}=\sqbinom{n-1+k}{k}_q$ is the appropriate analogue when dealing with multisets.}
If $d$ is even, so $\gcd(b,d)\neq 1$, it still turns out that $\sqbinom{n}{k}^{(2)}_\omega=\pm \sqbinom{n}{k}_\omega$, and the negative signs can be fixed by multiplying $\sqbinom{n}{k}_q^{(2)}$ with a suitable power of $q$. A similar phenomenon happens whenever $\gcd(b,d)=2$, but for general $\gcd(b,d)\neq 1$ with $d|n$ or $d|(n-1)$ the integers $\sqbinom{n}{k}_\omega^{(b)}$ seem unrelated to cyclic actions on $X_b^n[k]$. For example, consider $b=d=n=k=3$ with
\begin{equation*}
\sqbinom{3}{3}^{(b)}_q= q^4(1+2q+q^2+2q^3+q^4),
\end{equation*}
and consider the set
\begin{align*}
X_3^3[3] &= \{ (0,1,2), (0,2,1), (1,0,2), (1,2,0), (2,0,1), (2,1,0), (1,1,1)\}.
\end{align*}
Since $\rho$ acts on $X_3^3[3]$ by cycling all three coordinates we have $X_3^3[3]^\rho=\{(1,1,1)\}$, but if $\omega$ is a primitive $3$rd root of unity then
\begin{align*}
\sqbinom{3}{3}^{(3)}_\omega &= \omega^4(1+2\omega+\omega^2+2\omega^3+\omega^4)\\
&=\omega+2\omega^2+1+2\omega+\omega^2\\
&= 1+3\omega+3\omega^2\\
&= 1+3\omega+3(-1-\omega)\\
&= -2\\
&\neq \#X_3^3[3]^\rho.
\end{align*}

We first observed Theorem \ref{thm:csp}a in the case $b=n+1$ and then discovered the result for $\gcd(b,d)=1$ in the course of proving it. We discovered Theorem \ref{thm:csp}b much later, after a first draft of this paper was already written, and only then realized the connection to the ``nearly free cyclic action'' in Theorem 1.1 of Reiner, Stanton and White \cite{rsw}. We believe this accidental rediscovery confirms the inevitability of the idea.

Reiner, Stanton and White gave two proofs of Proposition \ref{prop:csp}. Their first proof interprets the $q$-binomial coefficients evaluated at roots of unity as character values for certain $GL_n$ representations, which can also be viewed as principal specializations of symmetric polynomials. Their second proof uses ``brute force'' arguments to show that enumeration and the $q$-substitutions have the same explicit formula.

Our point of view is that there is not a big distance between the two proofs. In Theorem \ref{thm:main} below we observe that Reiner, Stanton and White's ad hoc collection of $q$-binomial evaluations \cite[Proposition 4.2]{rsw} can be unified via specialization of certain symmetric polynomials at roots of unity. This is a case where generalization leads to simplification.

Unfortunately, it seems that a fully representation-theoretic proof of Theorem \ref{thm:csp} along the lines of Reiner, Stanton and White might be difficult because the relevant symmetric polynomials do not have positive Schur coefficients (see Section \ref{sec:other}). Perhaps the point of view in Doty and Walker \cite{dw} can be used to generalize the representation-theoretic proof in \cite{rsw}.

\section{Bounded Symmetric Polynomials}\label{sec:specialization}

In this section we develop the relationship between Theorem \ref{thm:csp} and symmetric polynomials. Though this is not strictly necessary for the proof, we believe that it is the correct context for the problem and it leads to further interesting questions.

Recall that $f(z_1,\ldots,z_n)\in\ZZ[z_1,\ldots,z_n]$ is called {\em symmetric} if for any permutation $\pi$ of $\{1,\ldots,n\}$ we have
\begin{equation*}
f(z_{\pi(1)},\ldots,z_{\pi(n)})=f(z_1,\ldots,z_n).
\end{equation*}
The following observation guides the shape of this paper. It is surely well-known, but we have not seen it explicitly stated. We do not call this a lemma because we will not use these results in later proofs. Parts (b) and (c) of the observation use some basic Galois theory; however, they also follow from Theorem \ref{thm:main} below. See Remark \ref{rem:elementary}.

\begin{observation}\label{obs:galois} Let $f(z_1,\ldots,z_n)\in\ZZ[z_1,\ldots,z_n]$ be a symmetric polynomial with integer coefficients and let $\omega$ be a primitive $d$th root of unity.
\begin{enumerate}
\item If $d|n$ then we have $\omega^n=1$, hence
\begin{equation*}
f(\omega,\omega^2,\ldots,\omega^n)=f(\omega,\omega^2,\ldots,\omega^{n-1},1)=f(1,\omega,\ldots,\omega^{n-1}).
\end{equation*} 
If, moreover, $f$ is homogeneous of degree $k$ with $d\nmid k$ then we have
\begin{equation*}
f(\omega,\omega^2,\ldots,\omega^n)=f(1,\omega,\ldots,\omega^{n-1})=0.
\end{equation*}
\item If $d|n$ or $d|(n-1)$ then $f(1,\omega,\ldots,\omega^{n-1}) \in\ZZ$.
\item If $d|n$ or $d|(n+1)$ then $f(\omega,\omega^2,\ldots,\omega^n) \in\ZZ$.
\item If $d|(n-1)$ then $f(\omega,\omega^2,\ldots,\omega^n)$ need not be an integer.
\item If $d|(n+1)$ then $f(1,\omega,\ldots,\omega^{n-1})$ need not be an integer.
\end{enumerate}
\end{observation}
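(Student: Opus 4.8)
The plan is to dispatch part (a) by elementary manipulation, parts (b) and (c) by Galois theory, and parts (d), (e) by a single explicit counterexample. For part (a), when $d\mid n$ we have $\omega^n=(\omega^d)^{n/d}=1$, so the tuple $(\omega,\omega^2,\ldots,\omega^n)$ is a permutation of $(1,\omega,\ldots,\omega^{n-1})$ and symmetry of $f$ gives the first chain of equalities. For the vanishing statement I would exploit homogeneity: multiplying every argument by $\omega$ again only relabels the multiset $\{1,\omega,\ldots,\omega^{n-1}\}$ (because $\omega^n=1$), so $f(1,\omega,\ldots,\omega^{n-1})=\omega^k f(1,\omega,\ldots,\omega^{n-1})$, and when $d\nmid k$ the factor $1-\omega^k$ is nonzero, forcing the value to be $0$.

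For parts (b) and (c) the core observation is that $f$ evaluated at roots of unity is an algebraic integer lying in $\QQ(\omega)$; since a rational algebraic integer lies in $\ZZ$, it suffices to show the value is fixed by the Galois group $\operatorname{Gal}(\QQ(\omega)/\QQ)\cong(\ZZ/d\ZZ)^\times$. Writing $\sigma_a$ for the automorphism $\omega\mapsto\omega^a$ with $\gcd(a,d)=1$ and using that $f$ has integer coefficients, I get $\sigma_a\bigl(f(\omega^{c_1},\ldots,\omega^{c_n})\bigr)=f(\omega^{ac_1},\ldots,\omega^{ac_n})$. By symmetry of $f$ it is then enough to show that multiplication by $a$ preserves the multiset of exponents \emph{modulo} $d$. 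I would isolate the elementary fact that $r\mapsto ar$ is a bijection of $\ZZ/d\ZZ$ fixing $0$, so any multiset of residues whose multiplicity function is constant on the nonzero residues is preserved by all $\sigma_a$.

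The main work, and the step I expect to be the principal obstacle, is the residue bookkeeping that reduces each case to this fact. For $(1,\omega,\ldots,\omega^{n-1})$ the exponents are $0,1,\ldots,n-1$: writing $n=dm$ shows every residue occurs $n/d$ times, while writing $n-1=dm$ shows the residue $0$ occurs once more than the rest, with all nonzero residues occurring equally often. For $(\omega,\ldots,\omega^n)$ the exponents are $1,\ldots,n$; when $d\mid n$ this coincides mod $d$ with the previous multiset (since $\omega^n=1$), and writing $n+1=dm$ shows that when $d\mid(n+1)$ the residue $0$ occurs once \emph{fewer} than the rest. In every case the imbalance is concentrated at the residue $0$, which multiplication by $a$ fixes, so the isolated fact applies and $\sigma_a$ fixes the value. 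The only delicate point is the explicit count, which I would carry out by counting multiples of $d$ in the relevant interval.

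Finally, for parts (d) and (e) I would exhibit the single polynomial $f=e_1=z_1+\cdots+z_n$. A short geometric-series computation gives $f(\omega,\ldots,\omega^n)=\omega\cdot\frac{\omega^n-1}{\omega-1}=\omega$ when $d\mid(n-1)$ (since then $\omega^n=\omega$), and $f(1,\omega,\ldots,\omega^{n-1})=\frac{\omega^n-1}{\omega-1}=-\omega^{-1}$ when $d\mid(n+1)$ (since then $\omega^n=\omega^{-1}$). Taking any $d\geq 3$, for instance $d=3$ with $n=4$ in the first case and $n=2$ in the second, makes these values non-integral, establishing both non-examples at once.
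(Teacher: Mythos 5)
Your proposal is correct and follows essentially the same route as the paper: part (a) via homogeneity and the factor $1-\omega^k$, parts (b) and (c) by showing the value is a Galois-fixed algebraic integer because the multiset of exponents mod $d$ is stable under multiplication by units (the paper phrases this as $\varphi_r$ permuting block sequences built from $1$ and $(\omega,\ldots,\omega^{d-1})$, working in $\QQ(\zeta_n)$ or $\QQ(\zeta_{n\pm1})$ rather than directly in $\QQ(\zeta_d)$, but the content is identical), and parts (d), (e) by the same counterexample $f=z_1+\cdots+z_n$. Your explicit caveat that $d\geq 3$ is needed for the counterexamples to be non-integral is a small point the paper glosses over.
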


\begin{proof} (a): If $d|n$ and if $f$ is homogeneous of degree $k$ then
\begin{equation*}
f(\omega,\omega^2,\ldots,\omega^n)=\omega^k f(1,\omega,\ldots,\omega^{n-1})=\omega^kf(\omega,\omega^2,\ldots,\omega^n).
\end{equation*}
If $f(\omega,\omega^2,\ldots,\omega^n)\neq 0$ then this implies that $\omega^k= 1$, hence $d| k$.

Parts (b) and (c) use the following setup from Galois theory. Let $\zeta_m$ denote a primitive $m$th root of unity. The Galois group of $\QQ(\zeta_m)/\QQ$ is $\{\varphi_r:\gcd(r,m)=1\}$, where the automorphism $\varphi_r:\QQ(\zeta_m)\to\QQ(\zeta_m)$ is defined by $\varphi_r(\zeta_m)=\zeta_m^r$. If $\alpha\in\QQ(\zeta_m)$ satisfies $\varphi_r(\alpha)=\alpha$ for all $\gcd(r,m)=1$ then it follows that $\alpha\in\QQ$. If we also have $\alpha\in\ZZ[\zeta_m]$ then $\alpha$ satisfies a monic polynomial over $\ZZ$, which implies that $\alpha\in\ZZ$.

Let $\omega$ be a primitive $d$th root of unity and let $\bomega=(\omega,\ldots,\omega^{d-1})$ and write powers to denote concatenation of sequences. If $\gcd(r,d)=1$ then the automorphism $\varphi_r(\omega)=\omega^r$ permutes the sequence $\bomega$, hence it permutes sequences of the following four types:
\begin{equation*}
(1,\bomega)^m, \quad
(\bomega,1)^m,\quad
((1,\bomega)^m,1),\quad
((\bomega,1)^m,\bomega).
\end{equation*}
(b): If $d|n$ and $\gcd(r,n)=1$ then we also have $\gcd(r,d)=1$, hence the sequence $(1,\omega,\ldots,\omega^{n-1})=(1,\bomega)^{n/d}$ is permuted by $\varphi_r$. Since $f$ is symmetric it follows that
\begin{equation*}
\varphi_r( f(1,\omega,\ldots,\omega^{n-1})) = f(\varphi_r(1,\bomega)^{n/d})=f((1,\bomega)^{n/d})=f(1,\omega,\ldots,\omega^{n-1}).
\end{equation*}
Then since $f(1,\omega,\ldots,\omega^{n-1})\in\ZZ[\zeta_n]$ we have $f(1,\omega,\ldots,\omega^{n-1})\in\ZZ$. If $d|(n-1)$ and $\gcd(r,n-1)=1$ then we also have $\gcd(r,d)=1$, hence the sequence $(1,\omega,\ldots,\omega^{n-1})=((1,\bomega)^{(n-1)/d},1)$ is permuted by $\varphi_r$. Since $f$ is symmetric, the automorphism $\varphi_r$ of $\QQ(\zeta_{n-1})$ fixes $f(1,\omega,\ldots,\omega^{n-1})$, and since $f(1,\omega,\ldots,\omega^{n-1})\in\ZZ[\zeta_{n-1}]$ this implies that $f(1,\omega,\ldots,\omega^{n-1})\in\ZZ$. (c): If $d|n$ then $(\omega,\omega^2,\ldots,\omega^n)=(\bomega,1)^{n/d}$ and if $d|(n+1)$ then $(\omega,\omega^2,\ldots,\omega^n)=((\bomega,1)^{(n+1)/d-1},\bomega)$. The rest of the proof is the same as (b).

(d): If $d|(n-1)$ and $f(z_1,\ldots,z_n)=z_1+\cdots+z_n$ then $f(\omega,\omega^2,\ldots,\omega^n)=\omega\not\in\ZZ$. (e): If $d|(n+1)$ and $f(z_1,\ldots,z_n)=z_1+\cdots+z_n$ then $f(1,\omega,\ldots,\omega^{n-1})=-\omega\not\in\ZZ$.
\end{proof}

For an indeterminate $q$ and a symmetric polynomial $f(z_1,\ldots,z_n)\in\ZZ[z_1,\ldots,z_n]$ we consider the {\em principal specialization}
\begin{equation*}
f(1,q,q^2,\ldots,q^{n-1})\in\ZZ[q].
\end{equation*}
The evaluation of $q$ at roots of unity appears often in the literature for specific families of $f$, but we have not seen it spelled out that this is a general phenomenon with three different cases: $d|n$, $d|(n-1)$ and $d|(n+1)$. The incompatibility of the cases $d|(n-1)$ and $d|(n+1)$ leads to some ambiguity whether one should define the principal specialization as $f(1,\ldots,q^{n-1})$ or $f(q,\ldots,q^n)$. The former is the standard convention.

In this paper we are interested in the case when $f$ is a certain generalization of the elementary symmetric polynomials $e_k$ and the complete symmetric polynomials $h_k$. Recall the generating functions
\begin{align*}
E(t;z_1,\ldots,z_n) &= \prod_{i=1}^n (1+z_it)= \sum_{k\geq 0} e_k(z_1,\ldots,z_n) t^k,\\
H(t;z_1,\ldots,z_n) &= \prod_{i=1}^n (1-z_it)^{-1} = \sum_{k\geq 0} h_k(z_1,\ldots,z_n) t^k,
\end{align*}
and the reciprocity relation $E(-t;z_1,\ldots,z_n)H(t;z_1,\ldots,z_n)=1$. The principal specializations of $e_k$ and $h_k$ satisfy
\begin{align}
e_k(1,q,\ldots,q^{n-1}) &= q^{k(k-1)/2} \sqbinom{n}{k}_q,\\
h_k(1,q,\ldots,q^{n-1}) &= \sqbinom{n-1+k}{n-1}_q.
\end{align}
For any $b\geq 2$ we consider the {\em $b$-bounded symmetric polynomials} $h_k^{(b)}$ with generating function
\begin{equation*}
H^{(b)}(t;z_1,\ldots,z_n) = \sum_{k\geq 0} h_k^{(b)}(z_1,\ldots,z_n)t^k = \prod_{i=1}^n (1+z_it+(z_it)^2+\cdots+(z_it)^{b-1}).
\end{equation*}
Note that $h_k^{(2)}=e_k$ and $h_k^{(b)}=h_k$ when $b>k$. We can express the generating function $H^{(b)}$ in terms of $E$ and $H$. This formula appears in several places; see, e.g., \cite[Theorem 3.12]{dw}.

\begin{lemma}\label{lem:gf}
\begin{equation*}
H^{(b)}(t;z_1,\ldots,z_n)= E(-t^b;z_1^b,\ldots,z_n^b) H(t;z_1,\ldots,z_n).
\end{equation*}
\end{lemma}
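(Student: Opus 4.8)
The plan is to exploit the fact that all three generating functions are \emph{products} over the single index $i=1,\ldots,n$, so that the claimed identity reduces to an identity between the corresponding single-variable factors. From the definitions we have $E(-t^b;z_1^b,\ldots,z_n^b)=\prod_{i=1}^n(1-z_i^b t^b)$ and $H(t;z_1,\ldots,z_n)=\prod_{i=1}^n(1-z_it)^{-1}$, so the right-hand side can be written as
\begin{equation*}
E(-t^b;z_1^b,\ldots,z_n^b)\,H(t;z_1,\ldots,z_n)=\prod_{i=1}^n \frac{1-z_i^b t^b}{1-z_i t}.
\end{equation*}
Since the left-hand side is by definition $\prod_{i=1}^n\bigl(1+z_it+(z_it)^2+\cdots+(z_it)^{b-1}\bigr)$, it suffices to match the factors one variable at a time.

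The key step is then the elementary finite geometric series identity: for any indeterminate $x$,
\begin{equation*}
1+x+x^2+\cdots+x^{b-1}=\frac{1-x^b}{1-x}.
\end{equation*}
Applying this with $x=z_it$ gives $1+z_it+\cdots+(z_it)^{b-1}=(1-(z_it)^b)/(1-z_it)=(1-z_i^bt^b)/(1-z_it)$, which is exactly the $i$th factor of the right-hand side computed above. Taking the product over $i=1,\ldots,n$ yields the lemma.

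The only point requiring a word of care is that $H(t;z_1,\ldots,z_n)$ is a formal power series (equivalently, a rational function) rather than a polynomial, so the intermediate expression $\prod_i (1-z_i^bt^b)/(1-z_it)$ should be read as an identity in the field of rational functions $\QQ(z_1,\ldots,z_n)(t)$, or equivalently in the ring of formal power series $\ZZ[z_1,\ldots,z_n][[t]]$. The finite geometric identity shows each factor $(1-z_i^bt^b)/(1-z_it)$ is in fact the \emph{polynomial} $1+z_it+\cdots+(z_it)^{b-1}$, so the denominators clear and the product is the polynomial $H^{(b)}$, with no convergence or well-definedness issues remaining. I do not expect any genuine obstacle here; the result is a one-line consequence of the geometric series identity once the product structure of $E$ and $H$ is made explicit.
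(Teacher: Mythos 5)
Your proof is correct and is essentially identical to the paper's: both reduce the identity to the factorwise geometric series identity $1+z_it+\cdots+(z_it)^{b-1}=(1-z_i^bt^b)/(1-z_it)$ and multiply over $i$. Your added remark about interpreting $H$ as a formal power series is a harmless (and reasonable) extra precaution that the paper leaves implicit.
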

\begin{proof}
\begin{align*}
H^{(b)}(t;z_1,\ldots,z_n) & = \prod_{i=1}^{n} (1+z_it+(z_it)^2+\cdots+(z_it)^{b-1})\\
&= \prod_{i=1}^n \frac{1-(z_it)^b}{1-z_it}\\
&= \prod_{i=1}^n (1+z_i^b(-t^b)) \prod_{i=1}^n (1-z_it)^{-1}\\
&= E(-t^b;z_1^b,\ldots,z_n^b) H(t;z_1,\ldots,z_n).
\end{align*}
\end{proof}
It follows that
\begin{align*}
\sum_{k\geq 0} h_k^{(b)}(z_1,\ldots,z_n)t^k &= \sum_{\ell\geq 0} e_\ell(z_1^b,\ldots,z_n^b)(-t^b)^\ell \sum_{m\geq 0} h_m(z_1,\ldots,z_n)t^m\\
&= \sum_{k\geq 0} \left(\sum_{b\ell+m=k} (-1)^\ell e_\ell(z_1^b,\ldots,z_n^b) h_m(z_1,\ldots,z_n)\right) t^k\\
&= \sum_{k\geq 0} \left(\sum_{\ell\geq 0} (-1)^\ell e_\ell(z_1^b,\ldots,z_n^b) h_{k-b\ell}(z_1,\ldots,z_n)\right) t^k
\end{align*}
and hence
\begin{equation}\label{eq:ugly}
 h_k^{(b)}(z_1,\ldots,z_n) = \sum_{\ell\geq 0} (-1)^\ell e_\ell(z_1^b,\ldots,z_n^b) h_{k-b\ell}(z_1,\ldots,z_n),
\end{equation}
where we define $h_k=0$ for negative $k$. This result can also be proved by inclusion-exclusion. In terms of the set \eqref{eq:xset} we have
\begin{equation*}
h_k^{(b)}(z_1,\ldots,z_n) = \sum_{(x_1,\ldots,x_n)\in X_b^n[k]} z_1^{x_1}\cdots z_n^{x_n},
\end{equation*}
so that $h_k^{(b)}$ is the generating function for $k$-multisets with multiplicities bounded above by $b$. Using notation from the previous section, we have
\begin{equation*}
\binom{n}{k}^{(b)}=h_k^{(b)}(\underbrace{1,\ldots,1}_{\text{$n$ times}})=\#X_b^n[k],
\end{equation*}
where $\binom{n}{k}^{(b)}$ is the coefficient of $t^k$ in $(1+t+\cdots+t^{b-1})^n$. The principal specialization gives a natural $q$-analogue
\begin{align*}
\sqbinom{n}{k}_q^{(b)}&:=h_k^{(b)}(1,q,\ldots,q^{n-1}) \\
&= \sum_{(x_1,\ldots,x_n)\in X_b^n[k]} q^{x_2+2x_3+\cdots+(n-1)x_n}.
\end{align*}
Since $e_\ell(1,q,\ldots,q^{n-1})=q^{\ell(\ell-1)/2}\sqbinom{n}{\ell}_q$ and $h_m(1,q,\ldots,q^{n-1})=\sqbinom{n-1+m}{m}_q$, equation \eqref{eq:ugly} implies the explicit formula
\begin{align}
\sqbinom{n}{k}_q^{(b)} &= \sum_{\ell\geq 0} (-1)^\ell e_\ell\left((q^0)^b,(q^1)^b,\ldots,(q^{n-1})^{b}\right) h_{k-b\ell}(1,q,\ldots,q^{n-1}) \nonumber \\
&= \sum_{\ell\geq 0} (-1)^\ell e_\ell\left((q^b)^0,(q^b)^1,\ldots,(q^b)^{n-1}\right) h_{k-b\ell}(1,q,\ldots,q^{n-1}) \nonumber \\
&= \sum_{\ell\geq 0} (-1)^\ell (q^b)^{\ell(\ell-1)/2}\sqbinom{n}{\ell}_{q^b} \sqbinom{n-1+k-b\ell}{k-b\ell}_q,\label{eq:uglier}
\end{align}
though we will never use this formula because our proof of Theorem \ref{thm:main} below follows from more elegant generating function arguments.

The polynomials $h_k^{(b)}(z_1,\ldots,z_n)$ have been studied by several authors. Doty and Walker \cite{dw} called them {\em modular complete symmetric functions} and showed that when $b=p$ is prime they are related to mod $p$ representation theory of $GL_n$.  We take the notation $h_k^{(b)}$ from Fu and Mei \cite{fumei} who called these the {\em truncated homogeneous symmetric functions}. More recently, Grinberg \cite{grinberg} computed the Schur expansion, which he showed has coefficients in $\{0,1,-1\}$. He called $h_k^{(b)}$ a {\em Petrie symmetric function} because the Schur coefficients are determinants of certain {\em Petrie matrices} (see the end of Section \ref{sec:other}). After publicizing his results, Grinberg learned about several previous occurrences of these functions and collected an extensive list of references in the introduction to \cite{grinberg}. In private communication, he indicated that he would not have chosen the name {\em Petrie} if he had known about the previous occurrences. It seems that such a basic concept should have a basic name; in consultation with Grinberg we arrived at the name {\em $b$-bounded}. Of course, one need not use the letter $b$.

Let $\omega$ be a primitive $d$th root of unity. Since $h_k^{(b)}(z_1,\ldots,z_n)\in\ZZ[z_1,\ldots,z_n]$ is symmetric we know from Observation \ref{obs:galois} that $\sqbinom{n}{k}_\omega^{(b)}=h_k^{(b)}(1,\omega,\ldots,\omega^{n-1})=h_k^{(b)}(\omega,\omega^2,\ldots,\omega^n)\in\ZZ$ when $d|n$, $\sqbinom{n}{k}_\omega^{(b)}=h_k^{(b)}(1,\omega,\ldots,\omega^{n-1})\in\ZZ$ when $d|(n-1)$ and $\omega^k\sqbinom{n}{k}_\omega^{(b)}=h_k^{(b)}(\omega,\omega^2,\ldots,\omega^n)\in\ZZ$ when $d|(n+1)$. Our main theorem gives generating functions for these integers. The proof is not difficult but the result is quite natural and seems to have been overlooked until now. It can be viewed as a systematization and generalization of Proposition 4.2 of Reiner, Stanton and White \cite{rsw}. See Remark \ref{rem:rsw} below.

\begin{theorem}[Main Theorem]\label{thm:main}
Let $\omega$ be a primitive $d$th root of unity.  Recall that we define $\binom{n}{k}^{(b)}=0$ and $\sqbinom{n}{k}_q^{(b)}=0$ when $n$ or $k$ is not an integer.
\begin{enumerate}
\item If $d|n$ and $\gcd(b,d)=g$ then
\begin{equation*}
\sum_{k=0}^{(b-1)n} \sqbinom{n}{k}_\omega^{(b)}t^k = \left( \frac{(1-t^{bd/g})^g}{1-t^d}\right)^{n/d}.
\end{equation*}
When $g=1$ this becomes
\begin{equation*}
\sum_{k=0}^{(b-1)n} \sqbinom{n}{k}_\omega^{(b)}t^k = \left( \frac{1-(t^d)^b}{1-t^d}\right)^{n/d} = [b]_{t^d}^{n/d},
\end{equation*}
and hence
\begin{equation*}
\sqbinom{n}{k}_\omega^{(b)} = \binom{n/d}{k/d}^{(b)}.
\end{equation*}
\item If $d|(n-1)$ and $\gcd(b,d)=g$ then
\begin{equation*}
\sum_{k=0}^{(b-1)n} \sqbinom{n}{k}_\omega^{(b)}t^k =[b]_t \left(\frac{(1-t^{bd/g})^g}{1-t^d}\right)^{(n-1)/d}.
\end{equation*}
When $g=1$ this becomes
\begin{equation*}
\sum_{k=0}^{(b-1)n} \sqbinom{n}{k}_\omega^{(b)}t^k = [b]_t [b]_{t^d}^{(n-1)/d},
\end{equation*}
and hence
\begin{equation*}
\sqbinom{n}{k}_\omega^{(b)} = \sum_{\ell=0}^{b-1} \binom{(n-1)/d}{(k-\ell)/d}^{(b)}.
\end{equation*}
\item If $d|(n+1)$ and $\gcd(b,d)=g$ then
\begin{equation*}
\sum_{k=0}^{(b-1)n} \omega^k\sqbinom{n}{k}_\omega^{(b)}t^k = \frac{1}{[b]_t}\left(\frac{(1-t^{bd/g})^g}{1-t^d}\right)^{(n+1)/d}.
\end{equation*}
When $g=1$ this becomes
\begin{equation}
\sum_{k=0}^{(b-1)n} \omega^k\sqbinom{n}{k}_\omega^{(b)}t^k = \frac{1}{[b]_t}[b]_{t^d}^{(n+1)/d}.
\label{eq:nplusone}
\end{equation}
The explicit computation of the coefficients is related to the Frobenius coin problem. See Corollary \ref{cor:interesting} in the next section. For later use in Section \ref{sec:other} we simply mention the special case when $b=2$ and $d=n+1$. If $\gcd(2,n+1)=g$ then one can check that
\begin{equation*}
\frac{1}{[2]_t}\left(\frac{(1-t^{2(n+1)/g})^g}{1-t^{n+1}}\right) = 1-t+t^2-\cdots+(-1)^n t^n,
\end{equation*}
and hence
\begin{equation*}
e_k(\omega,\omega^2,\ldots,\omega^n) = \omega^k \sqbinom{n}{k}_\omega^{(2)}=\omega^{k(k+1)/2}\sqbinom{n}{k}_\omega=\begin{cases}
(-1)^k & k<n+1,\\
0 & k\geq n+1.
\end{cases}
\end{equation*}
\end{enumerate}
\end{theorem}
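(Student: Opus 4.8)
The plan is to work throughout with the generating function in its product form, rather than routing through Lemma~\ref{lem:gf}. Since $\sqbinom{n}{k}_q^{(b)}\in\NN[q]$ is the coefficient of $t^k$ in the polynomial $\prod_{i=0}^{n-1}[b]_{q^it}$, substituting $q=\omega$ is literally evaluation, so
\[
\sum_{k}\sqbinom{n}{k}_\omega^{(b)}t^k=\prod_{i=0}^{n-1}[b]_{\omega^it}.
\]
For part (c) I would first use that $h_k^{(b)}$ is homogeneous of degree $k$, so $h_k^{(b)}(\omega,\ldots,\omega^n)=\omega^k h_k^{(b)}(1,\ldots,\omega^{n-1})=\omega^k\sqbinom{n}{k}_\omega^{(b)}$; the left-hand side in (c) is therefore the honest generating function $\sum_k h_k^{(b)}(\omega,\ldots,\omega^n)t^k=\prod_{i=1}^n[b]_{\omega^it}$. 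Thus all three parts reduce to evaluating a product of shifted factors $[b]_{\omega^it}$ over a range of exponents, and the whole theorem rests on a single computation.

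The heart of the argument is the \emph{fundamental block} $\prod_{j=0}^{d-1}[b]_{\omega^jt}$, the product over one full period of $\omega$. Writing $[b]_x=(1-x^b)/(1-x)$ gives
\[
\prod_{j=0}^{d-1}[b]_{\omega^jt}=\frac{\prod_{j=0}^{d-1}\bigl(1-\omega^{jb}t^b\bigr)}{\prod_{j=0}^{d-1}\bigl(1-\omega^jt\bigr)}.
\]
The denominator is $1-t^d$ because $\omega^0,\ldots,\omega^{d-1}$ run through all $d$th roots of unity and $\prod_{\zeta^d=1}(1-\zeta t)=1-t^d$. For the numerator, let $g=\gcd(b,d)$; then $\omega^b$ has order $d/g$, so as $j$ runs over $0,\ldots,d-1$ the element $\omega^{jb}$ runs exactly $g$ times through all $(d/g)$th roots of unity. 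Hence $\prod_{j=0}^{d-1}(1-\omega^{jb}t^b)=\bigl(1-t^{bd/g}\bigr)^g$ and the block equals $(1-t^{bd/g})^g/(1-t^d)$. This multiplicity count is the one genuinely substantive step, and it is precisely where $g=\gcd(b,d)$ enters; everything else is bookkeeping.

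It then remains to count how many full periods the relevant exponent range contains. For (a) the exponents $i=0,\ldots,n-1$ cover each residue mod $d$ exactly $n/d$ times (as $d\mid n$), so the product is the block to the power $n/d$. For (b), since $d\mid(n-1)$ the exponents $0,\ldots,n-2$ give $(n-1)/d$ full periods while the final factor contributes $[b]_{\omega^{n-1}t}=[b]_t$ (because $\omega^{n-1}=1$), yielding $[b]_t$ times the block to the power $(n-1)/d$. For (c), the exponents $0,\ldots,n$ cover each residue $(n+1)/d$ times (as $d\mid(n+1)$), and removing the $i=0$ factor from $\prod_{i=1}^n$ divides by $[b]_t$, giving the block to the power $(n+1)/d$ over $[b]_t$. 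Setting $g=1$ collapses the block to $[b]_{t^d}=(1-(t^d)^b)/(1-t^d)$, and extracting the coefficient of $t^k$ from $[b]_{t^d}^{n/d}$ and from $[b]_t\,[b]_{t^d}^{(n-1)/d}$ produces the stated closed forms $\binom{n/d}{k/d}^{(b)}$ and $\sum_{\ell=0}^{b-1}\binom{(n-1)/d}{(k-\ell)/d}^{(b)}$, using the convention that the symbol vanishes when an entry is non-integral.

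For the closing $b=2$ formula in (c) I would start from $\sqbinom{n}{k}_q^{(2)}=e_k(1,\ldots,q^{n-1})=q^{k(k-1)/2}\sqbinom{n}{k}_q$, which gives $\omega^k\sqbinom{n}{k}_\omega^{(2)}=\omega^{k(k+1)/2}\sqbinom{n}{k}_\omega$ and identifies the quantity as $e_k(\omega,\ldots,\omega^n)$. Here the block is $1+t^d$ when $g=1$ and $1-t^d$ when $g=2$, so in both cases the generating function is $(1\pm t^d)^{(n+1)/d}/(1+t)$; extracting coefficients via $1/(1+t)=\sum_{j\ge 0}(-1)^jt^j$ together with the alternating partial-sum identity $\sum_{m=0}^{M}(-1)^m\binom{N}{m}=(-1)^M\binom{N-1}{M}$ (with $N=(n+1)/d$ and $M=\lfloor k/d\rfloor$) collapses the convolution to a single binomial coefficient $\binom{(n+1)/d-1}{\lfloor k/d\rfloor}$ up to a sign, and the specialization $d=n+1$ (so $N=1$ and $M=0$ whenever $k<n+1$) then gives the claimed $(-1)^k$ versus $0$ dichotomy at once. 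I expect the main obstacle to lie entirely in this last piece: tracking the factor $(-1)^{k-dm}$ against the two parities of $d$ and reconciling it with the target exponent is the only genuinely delicate part of the whole argument, the rest being forced by the block computation of the second paragraph.
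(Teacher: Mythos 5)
Your proposal is correct and follows essentially the same route as the paper: the paper likewise reduces everything to the single identity $\prod_{j=0}^{d-1}(1-\omega^{jb}T)=(1-T^{d/g})^g$ (your ``fundamental block'' with $T=t^b$) and then does the same bookkeeping of full periods, peeling off the $[b]_{\omega^{n-1}t}=[b]_t$ factor in case (b) and dividing by a single $[b]_t$ factor in case (c). One remark on the $b=2$ coda, which you rightly flag as the delicate part: carrying out your coefficient extraction via $\sum_{m=0}^{M}(-1)^m\binom{N}{m}=(-1)^M\binom{N-1}{M}$ yields the sign $(-1)^{k-\lfloor k/d\rfloor}$, not the printed $(-1)^{k-d\lfloor k/d\rfloor}$; these agree for odd $d$ but differ for even $d$ when $\lfloor k/d\rfloor$ is odd, and a direct check (e.g.\ $n=3$, $d=2$, where $e_2(-1,1,-1)=-1$ while the printed formula gives $+1$) shows the printed exponent is the one in error, so your computation is the correct one and the displayed formula in part (c) should read $(-1)^{k-\lfloor k/d\rfloor}$. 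The special case $d=n+1$ quoted at the end is unaffected since there $\lfloor k/d\rfloor=0$ whenever the binomial coefficient is nonzero.
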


\begin{proof}
Let $\omega$ be a primitive $d$th root of unity and let $T$ be an indeterminate, so that $\prod_{j=0}^{d-1} (1-\omega^jT)=1-T^d$. If $\gcd(b,d)=1$, then the sequence $1,\omega^b,\ldots,\omega^{(d-1)b}$ is a permutation of $1,\omega,\ldots,\omega^{d-1}$, hence we also have $\prod_{j=0}^{d-1} (1-\omega^{jb}T)=1-T^d$. More generally, let $g=\gcd(b,d)$, so that $\omega^b$ is a primitive $(d/g)$th root of unity. Since $\gcd(d/g,b/g)=1$ this implies that $1,\omega^b,\ldots,\omega^{(d/g-1)b}$ is a permutation of $1,\omega^g,\ldots,\omega^{(d/g-1)g}$, hence
\begin{equation*}
\prod_{j=0}^{d-1} (1-\omega^{jb}T) = \left(\prod_{j=0}^{d/g-1} (1-\omega^{jb}T)\right)^g=(1-T^{d/g})^g.
\end{equation*}
Finally, if $d|N$ then we note that
\begin{equation*}
\prod_{j=0}^{N-1} (1-\omega^{jb}T) = \left(\prod_{j=0}^{d-1} (1-\omega^{jb}T)\right)^{N/d}=((1-T^{d/g})^g)^{N/d}.
\end{equation*}
Next we consider the generating function for the principal specializations of $h_k^{(b)}$:
\begin{align*}
H^{(b)}(t;1,q,\ldots,q^{n-1}) &= \sum_{k\geq 0} h_k^{(b)}(1,q,\ldots,q^{n-1}) t^k\\
&= \prod_{j=0}^{n-1} (1+q^{j}t+\cdots+(q^{j}t)^{(b-1)})\\
&= \prod_{j=0}^{n-1} \frac{1-q^{jb}t^b}{1-q^jt}.
\end{align*}
(a): If $d|n$ then taking $T=t^b$ gives
\begin{equation*}
H^{(b)}(t;1,\omega,\ldots,\omega^{n-1}) = \frac{\prod_{j=0}^{n-1} (1-\omega^{jb}(t^b))}{\prod_{j=0}^{n-1} (1-\omega^jt)} = \frac{(1-(t^b)^{d/g})^{gn/d}}{(1-t^d)^{n/d}}=\left( \frac{(1-t^{bd/g})^g}{1-t^d}\right)^{n/d}.
\end{equation*}
(b): For the case $d|(n-1)$ we first observe that
\begin{equation*}
H^{(b)}(t;1,q,\ldots,q^{n-1}) = \frac{1-q^{(n-1)b}t^b}{1-q^{n-1}t} \, \prod_{j=0}^{n-2} \frac{1-q^{jb}t^b}{1-q^jt} = [b]_{q^{n-1}t}  \prod_{j=0}^{n-2} \frac{1-q^{jb}t^b}{1-q^jt}.
\end{equation*}
If $d|(n-1)$ then we have $[b]_{\omega^{n-1}t}=[b]_t$. So taking $N=n-1$ and $T=t^b$ gives
\begin{equation*}
H^{(b)}(t;1,\omega,\ldots,\omega^{n-1}) = [b]_{t}  \prod_{j=0}^{n-2} \frac{1-\omega^{jb}t^b}{1-\omega^jt} = [b]_t \left( \frac{(1-t^{bd/g})^g}{1-t^d}\right)^{(n-1)/d}.
\end{equation*}
(c): For the case $d|(n+1)$ we first observe that
\begin{equation*}
H^{(b)}(t;q,\ldots,q^n) = \prod_{j=1}^{n} \frac{1-q^{jb}t^b}{1-q^jt}=\frac{1}{[b]_{q^{n+1}t}}\prod_{j=0}^{n} \frac{1-q^{jb}t^b}{1-q^jt}.
\end{equation*}
If $d|(n+1)$ then we have $[b]_{\omega^{n+1}t}=[b]_t$. So taking $N=n+1$ and $T=t^b$ gives
\begin{equation*}
H^{(b)}(t;\omega,\omega^2,\ldots,\omega^n) = \frac{1}{[b]_t} \prod_{j=0}^{n} \frac{1-\omega^{jb}t^b}{1-\omega^jt} = \frac{1}{[b]_t}\left( \frac{(1-t^{bd/g})^g}{1-t^d}\right)^{(n+1)/d}.
\end{equation*}
\end{proof}

\begin{remark}\label{rem:rsw} Parts (a) and (b) of Theorem \ref{thm:main} generalize and explain Proposition 4.2 in Reiner, Stanton and White \cite{rsw}. Their identities 4.2(i) and 4.2(ii) correspond to part (a) with $b=\infty$ and $b=2$, respectively. Their 4.2(iii) and 4.2(iv) correspond to part (b) with $b=\infty$ and $b=2$, respectively. Part (c) has no analogue in \cite{rsw}.
\end{remark}

\begin{remark}\label{rem:elementary} Special cases of this theorem give a combinatorial proof of Observation \ref{obs:galois} that avoids Galois theory. That is, in the case $b=2$ this theorem shows that the appropriate specializations of $h_k^{(2)}=e_k$ are integers. (When $d$ is odd this involves some sign bookkeeping, so perhaps it is more natural to use the case $b=\infty$ when $h_k^{(\infty)}=h_k$.) It is well known that any symmetric polynomial in $\ZZ[z_1,\ldots,z_n]$ can be expressed (uniquely) as a polynomial in $e_0,e_1,\ldots,e_n$ (also as a polynomial in $h_0,h_1,\ldots,h_n$) with integer coefficients.
\end{remark}

Our original proof of Theorem \ref{thm:main}a was based on the explicit formula \eqref{eq:uglier} and involved lots of annoying case by case analysis, using the identities from \cite[Prop 4.2]{rsw}. A similar brute force proof of Theorem \ref{thm:main}b could be carried out along the same lines. It seems more difficult to prove Theorem \ref{thm:main}c without using generating functions.

In the next section we will study the coefficients in equation \eqref{eq:nplusone}. To end this section we mention an interesting corollary of Theorem \ref{thm:main}.

\begin{corollary}\label{cor:tequalsone}
Let $\omega$ be a primitive $d$th root of unity.
\begin{enumerate}
\item If $d|n$ then we have
\begin{equation*}
\sum_{k=0}^{(b-1)n} \sqbinom{n}{k}_\omega^{(b)} = \begin{cases} b^{n/d} & \gcd(b,d)=1,\\ 0 & \gcd(b,d)\neq 1.\end{cases}
\end{equation*}
\item If $d|(n-1)$ then we have
\begin{equation*}
\sum_{k=0}^{(b-1)n} \sqbinom{n}{k}_\omega^{(b)} = \begin{cases} b^{(n-1)/d+1} & \gcd(b,d)=1,\\ 0 & \gcd(b,d)\neq 1.\end{cases}
\end{equation*}
\item If $d|(n+1)$ then we have
\begin{equation*}
\sum_{k=0}^{(b-1)n} \omega^k\sqbinom{n}{k}_\omega^{(b)} = \begin{cases} b^{(n+1)/d-1} & \gcd(b,d)=1,\\ 0 & \gcd(b,d)\neq 1.\end{cases}
\end{equation*}
\end{enumerate}
\end{corollary}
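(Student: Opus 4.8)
The plan is to observe that each of the three sums in the corollary is simply the corresponding generating function of Theorem \ref{thm:main} evaluated at $t=1$, since for any polynomial we have $\sum_k c_k = \bigl(\sum_k c_k t^k\bigr)\big|_{t=1}$. Thus the corollary reduces to substituting $t=1$ into the three formulas of Theorem \ref{thm:main}. The only care required is that those formulas are written as rational expressions whose numerator and denominator both vanish at $t=1$, so I must evaluate a $0/0$ expression as a limit; this is legitimate because Theorem \ref{thm:main} guarantees the left-hand sides are genuine polynomials in $t$, so the limit exists and equals the desired sum.

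First I would isolate the common factor $\frac{(1-t^{bd/g})^g}{1-t^d}$ that appears in all three parts and evaluate it at $t=1$. Since $g=\gcd(b,d)$ divides $b$, the quotient $b/g$ is a positive integer, and the clean factorization is
\[
\frac{(1-t^{bd/g})^g}{1-t^d} = (1-t^{bd/g})^{g-1}\cdot\frac{1-(t^d)^{b/g}}{1-t^d} = (1-t^{bd/g})^{g-1}\,[b/g]_{t^d}.
\]
At $t=1$ the factor $[b/g]_{t^d}$ equals $b/g$, while $(1-t^{bd/g})^{g-1}$ equals $1$ when $g=1$ (an empty product) and $0$ when $g>1$. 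Hence the common factor evaluates to $b$ when $g=1$ and to $0$ when $g>1$.

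With this in hand the three parts follow immediately by substitution. For part (a) the generating function is the common factor raised to the power $n/d$, giving $b^{n/d}$ when $g=1$ and $0$ when $g>1$. For part (b) one multiplies by $[b]_t$, which contributes $[b]_1=b$ at $t=1$, yielding $b\cdot b^{(n-1)/d}=b^{(n-1)/d+1}$ when $g=1$ and $b\cdot 0=0$ when $g>1$. For part (c) one divides by $[b]_t$; since $[b]_1=b\neq 0$ this division is harmless at $t=1$, producing $\frac{1}{b}\cdot b^{(n+1)/d}=b^{(n+1)/d-1}$ when $g=1$ and $\frac{1}{b}\cdot 0=0$ when $g>1$.

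The only point requiring mild caution is the case $g>1$: here the common factor vanishes to order $g-1\geq 1$ at $t=1$, and raising it to the positive exponent $n/d$, $(n-1)/d$, or $(n+1)/d$ (each at least $1$) preserves the value $0$, while the extra factors $[b]_t^{\pm 1}$ are finite and nonzero at $t=1$ and so cannot disturb this conclusion. I expect no genuine obstacle; the only thing to get right is the order-of-vanishing bookkeeping that cleanly separates the case $g=1$ from the case $g>1$.
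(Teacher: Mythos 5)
Your proposal is correct and follows the paper's strategy exactly: both proofs amount to setting $t=1$ in Theorem \ref{thm:main}, with the only work being the evaluation of the common factor $(1-t^{bd/g})^g/(1-t^d)$ at $t=1$ when $g=\gcd(b,d)>1$. The one place you diverge is in how that evaluation is carried out. The paper treats it as a $0/0$ limit and applies L'Hospital's rule, obtaining $\lim_{t\to 1} b(1-t^{bd/g})^{g-1}t^{d(b-g)/g}=0$. You instead use the polynomial identity
\begin{equation*}
\frac{(1-t^{bd/g})^g}{1-t^d}=(1-t^{bd/g})^{g-1}\,[b/g]_{t^d},
\end{equation*}
valid because $g\mid b$, which exhibits the expression as an honest polynomial and makes the substitution $t=1$ direct: the value is $b$ when $g=1$ and $0$ when $g>1$, with no limit or calculus needed. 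This is a genuine (if small) improvement in cleanliness, since it also re-proves in passing that the right-hand sides of Theorem \ref{thm:main} are polynomials. Your remaining bookkeeping (the factors $[b]_t^{\pm 1}$ contribute $b^{\pm 1}$ at $t=1$, and the vanishing base raised to a positive exponent stays $0$) matches the paper's computation for $g=1$. The only caveat, which affects the paper's proof equally, is the degenerate case $(n-1)/d=0$ in part (b), where the exponent is not at least $1$; you at least flag the positivity of the exponent as the hypothesis you are using, which is more care than the paper takes.
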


\begin{proof} For $\gcd(b,d)=1$ this follows easily by setting $t=1$ in Theorem \ref{thm:main}. For $\gcd(b,d)=g\neq 1$ we use L'Hospital's rule to observe that
\begin{equation*}
\lim_{t\to 1} \frac{(1-t^{bd/g})^g}{1-t^d} = \lim_{t\to 1} \frac{\frac{d}{dt} (1-t^{bd/g})^g}{\frac{d}{dt}(1-t^d)} = \lim_{t\to 1} b (1-t^{bd/g})^{g-1} t^{d(b-g)/g} = 0.
\end{equation*}
\end{proof}

The case $\gcd(b,d)=1$ of parts (a) and (b) also follows bijectively from Theorem \ref{thm:csp}. To see this for part (a), note from Theorem \ref{thm:csp}a that the sum in Corollary \ref{cor:tequalsone}a is the size of the set $(X_b^n)^{\rho^{n/d}}$ of all words $(x_1,\ldots,x_n)\in\{0,1,\ldots,b-1\}^n$ fixed by the rotation $\rho^{n/d}$. But we have a bijection $X_b^{n/d}\to(X_b^n)^{\rho^{n/d}}$ sending
\begin{equation*}
(x_1,\ldots,x_{n/d})\mapsto (x_1,\ldots,x_{n/d},\ldots,x_1,\ldots,x_{n/d}),
\end{equation*}
which shows that $\#(X_b^n)^{\rho^{n/d}}=b^{n/d}$.
Part (c) is related to ``$q$-analogues'' of some number-theoretic congruences. Note that
\begin{align*}
\sum_{k=0}^{(b-1)n} q^k \sqbinom{n}{k}_q^{(b)} t^k &= \sum_{k=0}^{(b-1)n} \sqbinom{n}{k}_q^{(b)} (qt)^k\\
&= \prod_{i=0}^{n-1} (1+q^{i+1}t+\cdots+(q^{i+1}t)^{b-1})\\
&= \prod_{i=1}^n [b]_{q^i t}.
\end{align*}
If $\omega$ is a primitive $d$th root of unity then substituting $q=\omega$ is the same as taking the equivalence class modulo the cyclotomic polynomial $\Phi_d(q)$. If $\gcd(b,d)=1$ and $d|(n+1)$ then Corollary \ref{cor:tequalsone}c becomes
\begin{equation*}
\prod_{i=1}^n [b]_{q^i} \equiv b^{(n+1)/d-1}  \text{ mod } \Phi_d(q).
\end{equation*}
If $p$ is prime then the $p$th cyclotomic polynomial is $\Phi_p(q)=1+q+\cdots+q^{p-1}=[p]_q$. If $n+1=d=p$ and $p\nmid b$ then this becomes
\begin{equation*}
\prod_{i=1}^{p-1} [b]_{q^i} \equiv 1 \text{ mod } [p]_q,
\end{equation*}
which Pan and Sun \cite[Formula 1.6]{ps} call a ``$q$-analogue'' of Fermat's little theorem. Other instances of Corollary \ref{cor:tequalsone}c may be interesting in this context.

\section{Sylvester Coin Polynomials}\label{sec:sylvester}

We have seen that parts (a) and (b) of Theorem \ref{thm:main} are related to cyclic sieving of multisets. In this section we investigate the combinatorial meaning of the integers in Theorem \ref{thm:main}c. If $\gcd(b,d)=1$ then it follows indirectly from Theorem \ref{thm:main}c that the expression $[b]_{t^d}/[b]_t$ is a polynomial with integer coefficients. While investigating this polynomial we discovered a relation to the Frobenius coin problem, which we now describe.

Given a list of positive integers $\a=(a_1,\ldots,a_m)\in\NN\setminus\{0\}^m$ and a natural number $n\in\NN$, let $\nu_\a(n)$ be the number of ways to express $n$ as a linear combination of $a_1,\ldots,a_m$ with non-negative coefficients:
\begin{equation*}
\nu_\a(n) := \#\{(k_1,\ldots,k_m)\in\NN^m: k_1a_1+\cdots +k_ma_m=n\}.
\end{equation*}
In terms of generating functions we have
\begin{equation*}
\sum_{n\geq 0} \nu_\a(n) t^n = \frac{1}{\prod_{i=1}^m (1-t^{a_i})}.
\end{equation*}
The numbers $\nu_\a(n)$ are associated with the names Frobenius and Sylvester. Sylvester \cite{sylvester} called these numbers ``denumerants''. Alfred Brauer \cite{brauer} attributed the general problem to Frobenius, ``who mentioned it occasionally in his lectures''. Today it is called the Frobenius coin problem, or the Diophantine Frobenius problem. See \cite{alfonsin} for a survey.

The first theorem in this subject says that if $\gcd(\a)=1$ then there exists a largest $n$ with $\nu_\a(n)=0$. (Alfons\'in \cite[Theorem 1.0.1]{alfonsin} calls this a ``folk result''.) When $\gcd(\a)=1$ we will denote the finite set of non-representable numbers by
\begin{equation*}
S_\a := \{n\in\NN\setminus\{0\}: \nu_\a(n)=0\}.
\end{equation*}
The letter $S$ is for Sylvester, who considered this set in the case of two parameters, and proved that $\#S_{a_1,a_2}=(a_1-1)(a_2-1)/2$. We call the generating polynomial for these numbers the {\em Sylvester coin polynomial}:
\begin{equation*}
S_\a(t):= \sum_{s\in S_\a} t^s.
\end{equation*}
For example, we have $S_{3,5}(t)=t+t^2+t^4+t^7$. The general Frobenius problem is very difficult. In this paper we are interested in the easiest, but still quite interesting, case of two coprime parameters. We will call these parameters $(a_1,a_2)=(b,d)$ to agree with the notation in Theorem \ref{thm:main}.

The main result connecting our Theorem \ref{thm:main}c to the Frobenius coin problem is the following. Brown and Shiue \cite{bs} attribute this result to \"Ozl\"uk. Krattenthaler and M\"uller \cite[Lemma 5]{km} studied the polynomial $[b]_{t^d}/[b]_t$ in the context of cyclic sieving, but did not notice the connection to the Frobenius problem.

\begin{lemma}\label{lem:sylvester}
Let $\gcd(b,d)=1$ and consider the finite set $S_{b,d}\subseteq\NN$. Then we have
\begin{equation*}
S_{b,d}(t) = \frac{t^{bd}-1}{(1-t^b)(1-t^d)} + \frac{1}{1-t},
\end{equation*}
and hence
\begin{equation*}
1+(t-1)S_{b,d}(t) = 1+\left(\frac{(1-t^{bd})(t-1)}{(1-t^b)(1-t^d)}-1\right) = \frac{1-(t^d)^b}{1-t^d}\frac{1-t}{1-t^b}= \frac{[b]_{t^d}}{[b]_t}.
\end{equation*}
By symmetry we also have $[d]_{t^b}/[d]_t=1+(t-1)S_{b,d}(t)$.
\end{lemma}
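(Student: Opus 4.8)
The plan is to compute the Sylvester coin polynomial directly from the definition of $S_{b,d}$ as the set of positive integers $n$ with $\nu_{b,d}(n)=0$, and then to obtain the second formula by routine algebra. The starting point is that every non-negative integer is either representable (i.e. $\nu_{b,d}(n)\geq 1$) or lies in $S_{b,d}$, and $0$ is always representable. Writing $R(t)=\sum t^n$ over the representable $n\geq 0$, this partition of $\NN$ gives
\begin{equation*}
\frac{1}{1-t} = R(t) + S_{b,d}(t),
\end{equation*}
so it suffices to compute $R(t)$, the generating function counting each representable number exactly once (as opposed to $\tfrac{1}{(1-t^b)(1-t^d)}$, which counts with multiplicity $\nu_{b,d}(n)$).

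The key step is to pin down a canonical representation. Since $\gcd(b,d)=1$, for each representable $n$ the residue of the $b$-coefficient is forced: any representation $n=k_1 b + k_2 d$ satisfies $k_1 b\equiv n\pmod d$, hence $k_1\equiv nb^{-1}\pmod d$. I would show this yields a bijection between representable integers and pairs $(r,k_2)$ with $0\leq r<d$ and $k_2\geq 0$, via $(r,k_2)\mapsto rb+k_2 d$. Injectivity uses coprimality (if $rb+k_2 d = r'b+k_2'd$ with $0\leq r,r'<d$, then $r\equiv r'\pmod d$ forces $r=r'$ and then $k_2=k_2'$), and surjectivity is the reduction to the minimal $b$-coefficient just described. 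Summing over this fundamental domain gives
\begin{equation*}
R(t) = \left(\sum_{r=0}^{d-1} t^{rb}\right)\frac{1}{1-t^d} = \frac{1-t^{bd}}{(1-t^b)(1-t^d)},
\end{equation*}
and substituting into the partition identity produces exactly the claimed formula for $S_{b,d}(t)$.

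With the first formula in hand, the remaining claims are pure algebra: multiplying $S_{b,d}(t)$ by $(t-1)$ collapses the $\tfrac{1}{1-t}$ term to $-1$ and leaves $\tfrac{(1-t)(1-t^{bd})}{(1-t^b)(1-t^d)}$, which factors as $\tfrac{1-t^{bd}}{1-t^d}\cdot\tfrac{1-t}{1-t^b} = [b]_{t^d}/[b]_t$, matching the display in the statement. The symmetry claim is then immediate: the set $S_{b,d}$ is unchanged under swapping $b$ and $d$, so $S_{b,d}(t)=S_{d,b}(t)$, and applying the same identity with the roles of $b$ and $d$ reversed yields $1+(t-1)S_{b,d}(t)=[d]_{t^b}/[d]_t$. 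The only genuine content is the bijection in the key step, and I expect the main obstacle to be stating the canonical-representation argument cleanly, since that is precisely where $\gcd(b,d)=1$ enters and where one must check that no positive integer is double-counted.
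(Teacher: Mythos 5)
Your proof is correct, but it takes a genuinely different route from the paper's. You compute the generating function $R(t)$ of representable numbers directly, via the canonical representation $n=rb+k_2d$ with $0\leq r<d$ and $k_2\geq 0$ (minimal $b$-coefficient), and then subtract from $\frac{1}{1-t}$; the coprimality hypothesis enters exactly where you say it does, in the uniqueness of $r$ modulo $d$, and the algebra afterwards is as routine as you claim. The paper instead works with the full multiplicity generating function $\frac{1}{(1-t^b)(1-t^d)}=\sum_n \nu_{b,d}(n)t^n$ and derives the identity from the recursion $\nu_{b,d}(n)-\nu_{b,d}(n-bd)=1$ for $n\geq bd$, which it proves geometrically via the ``double abacus'' (Lemma \ref{lem:abacus}): the plane is labeled by $\lambda(x,y)=xd+yb$, a fundamental row abacus is cut into blocks $B_0,B_1,\ldots$, and $\nu_{b,d}(n)=k$ exactly when the label $n$ lies in $B_k$. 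Your fundamental domain $\{(r,k_2):0\leq r<d,\ k_2\geq 0\}$ is essentially the nonnegative-$x$ part of that row abacus, i.e.\ the complement of the block $B_0$ whose labels form $S_{b,d}$, so the underlying combinatorics is the same; but your presentation is more elementary and self-contained, needing neither the multiplicity function nor the block decomposition. What the paper's heavier route buys is the abacus machinery itself, which is explicitly set up here because it is reused in the proof of Theorem \ref{thm:interesting}; your argument proves the lemma more directly but would not by itself supply that later infrastructure.
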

For example, when $(b,d)=(7,5)$ we have
\begin{align*}
S_{7,5} &=   \left\{ 1,2,3,4,6,8,9,11,13,16,18,23 \right\},\\
S_{7,5}(t) &= t+{t}^{2}+{t}^{3}+{t}^{4}+{t}^{6}+{t}^{8}+{t}^{9}+{t}^{11}+{t}^{13}+{t
}^{16}+{t}^{18}+{t}^{23}
,\\
[7]_{t^5}\,/\,[7]_t &= 1+(t-1)S_{7,5}(t)\\
&= 1+{t}^{5}+{t}^{7}+{t}^{10}+{t}^{12}+{t}^{14}+{t}^{17}+{t}^{19}+{t}^{24
}
\\
&\quad -t-{t}^{6}-{t}^{8}-{t}^{11}-{t}^{13}-{t}^{16}-{t}^{18}-{t}^{23}.
\end{align*}
Note that the coefficients of $[7]_{t^5}/[7]_t$ are in $\{0,1,-1\}$. See Theorem \ref{thm:interesting}a below for a nice interpretation of these coefficients. Though Lemma \ref{lem:sylvester} is not new, we will give a complete proof because it will allow us to set up notation (the ``double abacus'') that we need for the proof of Theorem \ref{thm:interesting}. We will obtain Lemma \ref{lem:sylvester} as a corollary of Lemma \ref{lem:abacus}.

Fix $\gcd(b,d)=1$ and consider the labeling $\lambda:\ZZ^2\to\ZZ$ defined by
\begin{equation*}
\lambda(x,y):=xd+yb.
\end{equation*}
We note that $\lambda$ increases by $d$ when we take a step to the right and increases by $b$ when we take a step up. We can also think of $\lambda(x,y)$ as the ``height'' of the point $(x,y)$ relative to the line $xd+yb=0$. We are interested in the discrete half-plane of points with non-negative labels, which we call the {\em double abacus}:\footnote{The double abacus was introduced in the theory of simultaneous core partitions. See \cite{anderson} and \cite{so}.}
\begin{equation*}
A:=\{(x,y)\in\ZZ^2: xd+yb\geq 0\}.
\end{equation*}
Figure \ref{fig:abacus} shows the double abacus when $(b,d)=(7,5)$. The proofs in this section are easiest to understand visually, so we advise the reader always to keep an eye on this figure.

\begin{figure}
\begin{center}
\includegraphics[scale=1]{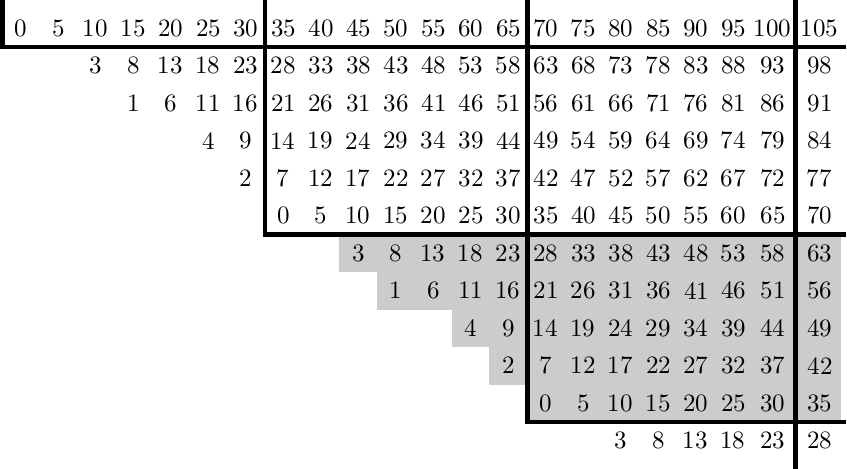}
\end{center}
\caption{The double abacus for $(b,d)=(7,5)$}
\label{fig:abacus}
\end{figure}

Now we describe a certain decomposition of the double abacus, which is indicated in the figure by black lines. Note that the labeling $\lambda$ is not injective. Indeed, for any $(x,y)\in \ZZ^2$ and $k\in\ZZ$ we have $\lambda(x-bk,y+dk)=\lambda(x,y)$. We can take as a fundamental domain any $d$ consecutive rows (or any $b$ consecutive columns). Let $R_k=\{(x,y)\in A: y=k\}$ be the $k$th row of $A$ and define the {\em fundamental row abacus}:
\begin{equation*}
R := R_0\cup R_1\cup \ldots \cup R_{d-1}.
\end{equation*}
This is the shaded region in Figure \ref{fig:abacus}. Note that the leftmost label in row $k$ is $\delta(k)$, where $\delta(k)\in\{0,1,\ldots,d-1\}$ satisfies $\delta(k)\equiv bk$ mod $d$. Thus $\lambda$ gives a bijection from the set of points $R_k$ to the set of labels $\delta(k)+d\NN$. Since $\gcd(b,d)=1$ we observe that $\delta$ defines a permutation of $\{0,1,\ldots,d-1\}$, hence each label $n\in\NN$ occurs exactly once in the row abacus $R$, and we obtain a bijection
\begin{equation*}
\lambda: R\to\NN.
\end{equation*}
The double abacus $A$ is the disjoint union of the translated row abaci $R+k(-b,d)$ for all $k\in\ZZ$. We find it useful to consider a further decomposition by columns. For each $k\geq 0$ we consider the $k$th {\em block} of $R$:\footnote{This is not related to the representation-theoretic meaning of ``block'' used in \cite{so}.}
\begin{equation*}
B_k := \{(x,y)\in R: (k-1)b\leq x<kb\}. 
\end{equation*}
Thus $R$ is the disjoint union of $B_0, B_1, B_2,\ldots$, where for each $k\geq 1$ the block $B_k$ has the shape of a $b\times d$ rectangle and the zeroth block $B_0$ has the shape of a triangle. The translated sets $B_k+\ell(-b,d)$ with $k\geq 0$ and $\ell\in\ZZ$ give a decomposition of the whole double abacus into disjoint rectangles and triangles, which is shown in Figure \ref{fig:abacus}. Observe that the labels inside the zeroth block $B_0$ are just the Sylvester set of non-representable numbers:
\begin{equation*}
S_{7,5}= \left\{ 1,2,3,4,6,8,9,11,13,16,18,23 \right\}.
\end{equation*}
The following result generalizes this fact.

\begin{lemma}\label{lem:abacus}
Fix $\gcd(b,d)=1$ and recall that we have a bijection $\lambda:R\to\NN$ from the fundamental row abacus to the set of all natural numbers. For each $n\in\NN$ recall that $\nu_{b,d}(n)$ is the number of distinct representations $n=kb+\ell d$ with $(k,\ell)\in\NN^2$. Then $\nu_{b,d}(n)=k$ if and only if the label $n$ occurs in the $k$th block of $R$:
\begin{equation*}
\nu_{b,d}(n)=k \quad\Longleftrightarrow\quad \lambda^{-1}(n)\in B_k.
\end{equation*}
If $n\geq bd$ then it follows from this that $\nu_{b,d}(n-bd)=\nu_{b,d}(n)-1$.
\end{lemma}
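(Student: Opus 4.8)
The plan is to reinterpret each representation $n=k'b+\ell'd$ with $k',\ell'\ge 0$ as a first-quadrant lattice point carrying label $n$, and then to read off how many such points exist from the column position of $\lambda^{-1}(n)$.

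First I would fix the point $(x,y)=\lambda^{-1}(n)\in R$, so that $0\le y\le d-1$ and $n=xd+yb$; this exhibits $(k',\ell')=(y,x)$ as one solution of $k'b+\ell'd=n$. Since $\gcd(b,d)=1$, the homogeneous equation $k'b+\ell'd=0$ has solutions exactly $(td,-tb)$, so the full set of integer solutions of $k'b+\ell'd=n$ is $\{(y+td,\,x-tb):t\in\ZZ\}$. Equivalently, these correspond to the label-$n$ points $(x-tb,\,y+td)$ of the double abacus $A$, and the genuine representations are those lying in the first quadrant. Imposing $k'=y+td\ge 0$ together with $0\le y\le d-1$ forces $t\ge 0$, while $\ell'=x-tb\ge 0$ reads $t\le x/b$. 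Hence $\nu_{b,d}(n)$ is the number of integers $t$ with $0\le t\le x/b$, namely $\lfloor x/b\rfloor+1$ if $x\ge 0$ and $0$ if $x<0$.

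Matching this against the blocks finishes the equivalence: $\lambda^{-1}(n)=(x,y)\in B_k$ means $(k-1)b\le x<kb$, so for $k\ge 1$ we have $\lfloor x/b\rfloor=k-1$ and $\nu_{b,d}(n)=k$, while $k=0$ is precisely $-b\le x<0$ and gives $\nu_{b,d}(n)=0$. For the final assertion I would argue geometrically: $n-bd=(x-b)d+yb$, so $\lambda^{-1}(n-bd)=(x-b,y)$ is $\lambda^{-1}(n)$ translated one block to the left. Since $n\ge bd$ exceeds the Frobenius number $bd-b-d$, the label $n$ is representable, so $k\ge 1$ and the translate lands in the genuine block $B_{k-1}$; therefore $\nu_{b,d}(n-bd)=k-1=\nu_{b,d}(n)-1$.

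I expect the only real subtlety to be the careful use of $0\le y\le d-1$ to turn the nonnegativity of $k'$ into the clean cutoff $t\ge 0$; this is exactly the step that makes the column index of $\lambda^{-1}(n)$ count representations. The rest is bookkeeping: separating the triangular block $B_0$ from the rectangular blocks $B_k$ with $k\ge 1$, and checking that $n\ge bd$ keeps the left-translate inside the abacus.
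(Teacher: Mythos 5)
Your proof is correct and is essentially the paper's argument in algebraic rather than geometric clothing: your parametrization $t\mapsto(y+td,\,x-tb)$ of the integer solutions is exactly the paper's bijection between representations of $n$ and the label-$0$ points weakly below and to the left of $\lambda^{-1}(n)=(x,y)$, and your count $\lfloor x/b\rfloor+1$ is its observation that each point of $B_k$ has exactly $k$ such points. The only cosmetic difference is your appeal to the Frobenius number $bd-b-d$ in the last step, which your own first computation already supplies (if $x<0$ then $n=xd+yb\le -d+(d-1)b<bd$, so $n\ge bd$ forces $x\ge 0$ and hence $k\ge 1$).
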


\begin{proof}
For the first statement, fix a point $(x,y)\in A$ in the double abacus and let $n=\lambda(x,y)=xd+yb$ be its label.  Consider the following two sets:
\begin{align*}
X&= \{(k,\ell)\in\NN^2: kd+\ell b=n\},\\
Y&= \{(x',y')\in A: x'\leq x,\,\, y'\leq y \text{ and } \lambda(x',y')=0 \}.
\end{align*}
The set $X$ consists of all representations of $n$ as an $\NN$-linear combination of $b$ and $d$, so that $\#X=\nu_{b,d}(n)$. The set $Y$ consists of all points in the double abacus that are below and to the left of $(x,y)$, and have label $0$. It is easy to check that the maps $(k,\ell)\mapsto (x-k,y-\ell)$ and $(x',y')\mapsto (x-x',y-y')$ are inverse bijections between $X$ and $Y$, hence $\#Y=\nu_{b,d}(n)$. Now the result follows from the observation that each point in the $k$th block $B_k$ has exactly $k$ points below and to its left with label $0$.

For the second part of the lemma, suppose that $n\geq bd$. Since $\lambda:R\to\NN$ is a bijection there exist unique points $(x,y), (x',y')\in R$ with $\lambda(x,y)=n$ and $\lambda(x',y')=n-bd$. But $\lambda(x-b,y)=(x-b)d+yb=(xd+yb)-bd=n-bd$ so we must have $(x',y')=(x-b,y)$. To complete the proof we observe that
\begin{align*}
\nu_{b,d}(n)=k &\quad\Longleftrightarrow \quad (x,y)\in B_k\\
& \quad\Longleftrightarrow \quad(x-b,y)\in B_{k-1} \\
& \quad\Longleftrightarrow \quad \nu_{b,d}(n-bd)=k-1.
\end{align*}
\end{proof}

For example, consider the double abacus in Figure \ref{fig:abacus} with $(b,d)=(7,5)$. The label $n=41$ occurs at position $(x,y)=(4,3)$ in block $B_1$. The unique point $(x',y')$ with $x'\leq x$, $y'\leq y$ and $\lambda(x',y')=0$ is $(x',y')=(0,0)$, which corresponds to the unique representation $41=5(x-x')+7(y-y')=5(4)+7(3)$. Next consider the label $n=88$ at position $(x,y)=(5,9)$ which is in a translation of block $B_2$. (The bijection in the proof of Lemma \ref{lem:abacus} is invariant under translation by $(-b,d)$.) The two relevant points $(x',y')$ with $x'\leq x$, $y'\leq y$ and $\lambda(x',y')=0$ are $(0,0)$ and $(-7,5)$, which correspond to the representations $88=5(5-0)+7(9-0)=5(5)+7(9)$ and $88=5(5-(-7))+7(9-5)=5(12)+7(4)$.

Now we can prove Lemma \ref{lem:sylvester}.

\begin{proof}
Fix $\gcd(b,d)=1$ and note that
\begin{equation*}
\frac{1}{(1-t^b)(1-t^d)} = (1+t^b+t^{2b}+\cdots)(1+t^d+t^{2d}+\cdots) = \sum_{n\geq 0} \nu_{b,d}(n)t^n.
\end{equation*}
Applying Lemma \ref{lem:abacus} gives
\begin{align*}
\frac{t^{bd}-1}{(1-t^b)(1-t^d)} &= \sum_{n\geq 0} \nu_{b,d}(n) t^{n+bd} - \sum_{n\geq 0} \nu_{b,d}(n) t^n\\
&= \sum_{n\geq bd} \nu_{b,d}(n-bd) t^n - \sum_{n\geq 0} \nu_{b,d}(n) t^n\\
&= -\sum_{n=0}^{bd-1}\nu_{b,d}(n)t^n + \sum_{n\geq bd} [\nu_{b,d}(n-bd)-\nu_{b,d}(n)]t^n\\
&= -\sum_{n=0}^{bd-1} \nu_{b,d}(n)t^n - \sum_{n\geq bd} t^n,
\end{align*}
and hence
\begin{equation*}
\frac{t^{bd}-1}{(1-t^b)(1-t^d)}+\frac{1}{1-t} = \sum_{n=0}^{bd-1} [1-\nu_{b,d}(n)]t^n.
\end{equation*}
But every number $0\leq n\leq bd-1$ satisfies $\nu_{b,d}(n)=0$ or $\nu_{b,d}(n)=1$, and every number $n\in\NN$ with $\nu_{b,d}(n)=0$ satisfies $n\leq bd-1$. Hence
\begin{equation*}
\frac{t^{bd}-1}{(1-t^b)(1-t^d)}+\frac{1}{1-t} = \sum_{\substack{ n\in\NN \\\nu_{b,d}(n)=0}} t^n =S_{b,d}(t).
\end{equation*}
\end{proof}

We will use this result to give two different explicit formulas (Theorem \ref{thm:alternating} and Corollary \ref{cor:interesting}) for the coefficients in Theorem \ref{thm:main}c. Recall that $\binom{n}{k}^{(b)}$ is the coefficient of $t^k$ in $[b]_t^n$ and $q^k\sqbinom{n}{k}_q^{(b)}$ is the coefficient of $t^k$ in $[b]_q[b]_{q^2}\cdots[b]_{q^n}$. If $\omega$ is a primitive $d$th root of unity with $d|(n+1)$ and if $\gcd(b,d)=1$ then Theorem \ref{thm:main}c says that
\begin{equation*}
\sum_{k=0}^{(b-1)n} \omega^k\sqbinom{n}{k}_\omega^{(b)} t^k=\frac{1}{[b]_t}[b]_{t^d}^{(n+1)/d}.
\end{equation*}
Combining this with Lemma \ref{lem:sylvester} gives the following.

\begin{theorem}\label{thm:alternating} Let $\gcd(b,d)=1$ and recall that $S_{b,d}$ is the finite set of integers $s\geq 1$ that cannot be represented as $s=bk+d\ell$ with $k,\ell\geq 0$. If $\omega$ is a primitive $d$th root of unity with $d|(n+1)$ then we have
\begin{equation*}
\omega^k\sqbinom{n}{k}_\omega^{(b)} = \binom{(n+1)/d-1}{k/d}^{(b)} + \sum_{s\in S_{b,d}} \binom{(n+1)/d-1}{(k-1-s)/d}^{(b)} - \sum_{s\in S_{b,d}} \binom{(n+1)/d-1}{(k-s)/d}^{(b)},
\end{equation*}
where we define $\binom{u}{v}^{(b)}=0$ when $u$ or $v$ is not an integer.
\end{theorem}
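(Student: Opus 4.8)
The plan is to feed the generating function of Theorem \ref{thm:main}c directly into the Sylvester identity of Lemma \ref{lem:sylvester} and then simply read off the coefficient of $t^k$. Since $d\mid(n+1)$ and $\gcd(b,d)=1$, Theorem \ref{thm:main}c gives
\[
\sum_{k=0}^{(b-1)n} \omega^k \sqbinom{n}{k}_\omega^{(b)} t^k = \frac{[b]_{t^d}^{(n+1)/d}}{[b]_t}.
\]
The first step is to invoke Lemma \ref{lem:sylvester} in the form $1/[b]_t = \bigl(1+(t-1)S_{b,d}(t)\bigr)/[b]_{t^d}$, which cancels one copy of $[b]_{t^d}$ and rewrites the right-hand side as $[b]_{t^d}^{(n+1)/d-1}\cdot\bigl(1+(t-1)S_{b,d}(t)\bigr)$. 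Everything here is a genuine polynomial: $S_{b,d}(t)$ is one because $\gcd(b,d)=1$ forces $S_{b,d}$ to be finite, and $[b]_{t^d}^{(n+1)/d-1}$ is one because $d\mid(n+1)$ gives the nonnegative integer exponent $(n+1)/d-1\geq 0$. So there is no convergence issue and the identity is purely an identity of polynomials.

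Next I would expand the two factors separately. Setting $m=(n+1)/d-1$ in the defining generating function $\sum_j \binom{m}{j}^{(b)} t^j = [b]_t^m$ and replacing $t$ by $t^d$ yields
\[
[b]_{t^d}^{(n+1)/d-1} = \sum_{j\geq 0} \binom{(n+1)/d-1}{j}^{(b)} t^{dj},
\]
while, using $S_{b,d}(t)=\sum_{s\in S_{b,d}} t^s$, the second factor expands as
\[
1+(t-1)S_{b,d}(t) = 1 + \sum_{s\in S_{b,d}} t^{s+1} - \sum_{s\in S_{b,d}} t^s.
\]

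The final step is to multiply these two expansions and extract the coefficient of $t^k$. The constant $1$ in the second factor contributes the term with $dj=k$, namely $\binom{(n+1)/d-1}{k/d}^{(b)}$; the summand $+\sum_s t^{s+1}$ contributes the terms with $dj=k-1-s$, namely $\sum_{s\in S_{b,d}}\binom{(n+1)/d-1}{(k-1-s)/d}^{(b)}$; and the summand $-\sum_s t^s$ contributes the terms with $dj=k-s$, namely $-\sum_{s\in S_{b,d}}\binom{(n+1)/d-1}{(k-s)/d}^{(b)}$. Adding the three contributions reproduces the claimed formula exactly. I do not expect a real obstacle here, as the computation is a direct coefficient extraction; the one point that deserves a careful sentence is that the convention $\binom{u}{v}^{(b)}=0$ for non-integer $v$ performs the divisibility bookkeeping automatically, so that whenever the relevant congruence fails (i.e.\ $k/d$, $(k-1-s)/d$, or $(k-s)/d$ is not an integer) the corresponding term vanishes. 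I would simply confirm that the three index conditions $dj=k$, $dj=k-1-s$, and $dj=k-s$ translate faithfully into the arguments $k/d$, $(k-1-s)/d$, and $(k-s)/d$ under this convention, so that no coefficient is silently miscounted; note also that $u=(n+1)/d-1$ is always an integer, so the ``$u$ not an integer'' clause never arises.
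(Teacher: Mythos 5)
Your proposal is correct and follows essentially the same route as the paper: apply Theorem \ref{thm:main}c, substitute $[b]_{t^d}/[b]_t = 1+(t-1)S_{b,d}(t)$ from Lemma \ref{lem:sylvester} to get $[b]_{t^d}^{(n+1)/d-1}\bigl(1+(t-1)S_{b,d}(t)\bigr)$, expand, and extract the coefficient of $t^k$. Your extra remarks on polynomiality and on the vanishing convention handling the divisibility bookkeeping are accurate but not points of divergence.
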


\begin{proof}
From Theorem \ref{thm:main}c and Lemma \ref{lem:sylvester} we have
\begin{align*}
\sum_{k=0}^{(b-1)n} \omega^k\sqbinom{n}{k}_\omega^{(b)} t^k &= \frac{[b]_{t^d}}{[b]_t} [b]_{t^d}^{(n+1)/d-1}\\
&= \left( 1+(t-1) S_{b,d}(t) \right)  [b]_{t^d}^{(n+1)/d-1}\\
&= [b]_{t^d}^{(n+1)/d-1} + (t-1) S_{b,d}(t) [b]_{t^d}^{(n+1)/d-1}\\
&=  \sum_{\ell} \binom{(n+1)/d-1}{\ell}^{(b)} t^{\ell d}  + (t-1) \sum_{s\in S_{b,d}} t^s \sum_{\ell} \binom{(n+1)/d-1}{\ell}^{(b)} t^{\ell d}.
\end{align*}
Taking the coefficient of $t^k$ gives the result.
\end{proof}

It is not clear from this alternating formula when the coefficients are zero, positive or negative. Corollary \ref{cor:interesting} will give an explicit criterion for this. It is based on the following theorem, which we believe has independent interest.

\begin{theorem}\label{thm:interesting} Let $\gcd(b,d)=1$. Let $(\delta_0,\ldots,\delta_{d-1})$ be the permutation of $(0,\ldots,d-1)$ defined by $\delta_r b\equiv r$ mod $d$ and let $(\beta_0,\ldots,\beta_{b-1})$ be the permutation of $(0,\ldots,b-1)$ defined by $\beta_r d\equiv r$ mod $b$. 
\begin{enumerate}
\item We have the following formula that is symmetric in $b$ and $d$:
\begin{equation*}
\frac{[b]_{t^d}}{[b]_t} =\frac{[d]_{t^b}}{[d]_t}= [\beta_1]_{t^d}[\delta_1]_{t^b} -t [b-\beta_1]_{t^d} [d-\delta_1]_{t^b}.
\end{equation*}
The coefficients of $[\beta_1]_{t^d}[\delta_1]_{t^b}$ are in $\{0,1\}$, the coefficients of $-t [b-\beta_1]_{t^d} [d-\delta_1]_{t^b}$ are in $\{0,-1\}$, and there is no cancellation. That is, there are $\beta_1\delta_1$ coefficients equal to $1$ and $(b-\beta_1)(d-\delta_1)$ coefficients equal to $-1$.
\item We also have the following non-symmetric formula for the $d$-multisections of $[b]_{t^d}/[b]_t$. For all $r\in\{0,1,\ldots,d-1\}$ define $\gamma_r:=(\delta_rb-r)/d \in\{0,\ldots,b-1\}$. Then we have
\begin{equation*}
\frac{[b]_{t^d}}{[b]_t}  = \sum_{r=0}^{d-1} t^r f_r(t^d),
\end{equation*}
where
\begin{equation*}
f_r(t) = \begin{cases}
t^{\gamma_r}[\beta_1]_t & \delta_r<\delta_1,\\
-t^{\gamma_r-b+\beta_1} [b-\beta_1]_t & \delta_r\geq \delta_1.
\end{cases}
\end{equation*}
\end{enumerate}
\end{theorem}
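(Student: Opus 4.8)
The whole theorem rests on a single arithmetic identity, so the plan is to extract that first. Write $\gamma_1=(\delta_1 b-1)/d$, which is an integer by the defining congruence $\delta_1 b\equiv 1\pmod d$, and note $\gamma_1\in\{1,\dots,b-1\}$ since $1<\delta_1 b\le (d-1)b$. Reducing $\delta_1 b=1+\gamma_1 d$ modulo $b$ gives $\gamma_1 d\equiv -1\pmod b$, hence $\gamma_1\equiv -\beta_1\pmod b$ (as $\beta_1\equiv d^{-1}$); since $\gamma_1$ and $b-\beta_1$ both lie in $\{1,\dots,b-1\}$ this forces $\gamma_1=b-\beta_1$. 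I would record this in the symmetric form $\beta_1 d+\delta_1 b=bd+1$, which also drops straight out of the Chinese Remainder Theorem: the left side is $\equiv 1$ modulo both $b$ and $d$, hence $\equiv 1\pmod{bd}$, and it lies in a range where $bd+1$ is the only admissible value. Everything below is a consequence of this one relation.

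For part (a), I would clear denominators. Since $[m]_{t^e}=(1-t^{me})/(1-t^e)$, multiplying the claimed identity by $(1-t^d)(1-t^b)$ reduces it to the polynomial identity
\[
(1-t^{\beta_1 d})(1-t^{\delta_1 b})-t(1-t^{(b-\beta_1)d})(1-t^{(d-\delta_1)b})=(1-t^{bd})(1-t).
\]
Expanding the left side gives eight monomials, and the relation $\beta_1 d+\delta_1 b=bd+1$ collapses the four shifted exponents: $\beta_1 d+\delta_1 b=bd+1$, $1+(b-\beta_1)d=\delta_1 b$, $1+(d-\delta_1)b=\beta_1 d$, and $1+(b-\beta_1)d+(d-\delta_1)b=bd$. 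The pairs $\pm t^{\beta_1 d}$ and $\pm t^{\delta_1 b}$ then cancel, leaving $1-t-t^{bd}+t^{bd+1}=(1-t^{bd})(1-t)$, as required. The middle equality $[b]_{t^d}/[b]_t=[d]_{t^b}/[d]_t$ I get for free, since swapping $b\leftrightarrow d$ interchanges $\beta_1\leftrightarrow\delta_1$ and $[\beta_1]_{t^d}\leftrightarrow[\delta_1]_{t^b}$, $[b-\beta_1]_{t^d}\leftrightarrow[d-\delta_1]_{t^b}$, leaving the right-hand expression invariant (alternatively one quotes Lemma \ref{lem:sylvester}).

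For the coefficient statement, I would expand the two products as $\sum t^{id+jb}$ over $0\le i<\beta_1,\ 0\le j<\delta_1$ (each coefficient $+1$) and $-\sum t^{1+id+jb}$ over $0\le i<b-\beta_1,\ 0\le j<d-\delta_1$ (each coefficient $-1$). Within each family the exponents are distinct: $id+jb=i'd+j'b$ forces $b\mid(i-i')$ by $\gcd(b,d)=1$, but $|i-i'|<b$, so $i=i',\ j=j'$; this yields the counts $\beta_1\delta_1$ and $(b-\beta_1)(d-\delta_1)$. The ``no cancellation'' claim is disjointness of the two exponent sets: a common value would give $(i_1-i_2)d+(j_1-j_2)b=1$, whence $i_1-i_2\equiv\beta_1\pmod b$; but $i_1-i_2$ ranges over the window $[\beta_1+1-b,\ \beta_1-1]$, which contains no integer $\equiv\beta_1\pmod b$, a contradiction. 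This also re-proves that the coefficients lie in $\{0,1,-1\}$.

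Part (b) is a bookkeeping of the $d$-section of the part (a) expression, and here the indexing is the only delicate point. In the positive product, $id+jb$ has residue $jb\bmod d$, equal to a prescribed $r$ exactly when $j=\delta_r$; this is a legal index ($j<\delta_1$) iff $\delta_r<\delta_1$, and then $id+\delta_r b=r+(i+\gamma_r)d$ gives $f_r(t)=t^{\gamma_r}[\beta_1]_t$. In the negative product I would use $1\equiv\delta_1 b\pmod d$ to rewrite the residue of $1+id+jb$ as $(\delta_1+j)b\bmod d$, equal to $r$ exactly when $j=\delta_r-\delta_1$; this is legal ($0\le j<d-\delta_1$) iff $\delta_r\ge\delta_1$, and then $1+(\delta_r-\delta_1)b=r+(\gamma_r-b+\beta_1)d$ gives $f_r(t)=-t^{\gamma_r-b+\beta_1}[b-\beta_1]_t$. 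The conditions $\delta_r<\delta_1$ and $\delta_r\ge\delta_1$ partition the residues, matching the sign dichotomy. Finally, to see $f_r$ is a genuine polynomial in the second case I would check $\gamma_r-b+\beta_1\ge 0$: since $\gamma_r=\lfloor\delta_r b/d\rfloor$ is nondecreasing in $\delta_r$, $\delta_r\ge\delta_1$ forces $\gamma_r\ge\gamma_1=b-\beta_1$. The main obstacle throughout is purely organizational—keeping the residue $r$, its ``inverse'' $\delta_r$, and the quotient $\gamma_r$ straight while repeatedly applying the single relation $\beta_1 d+\delta_1 b=bd+1$.
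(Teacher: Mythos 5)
Your proof is correct, but it takes a genuinely different route from the paper. The paper derives Theorem \ref{thm:interesting} geometrically from the double abacus: starting from Lemma \ref{lem:sylvester} in the form $[b]_{t^d}/[b]_t = 1+(t-1)S_{b,d}(t)$, it translates the Sylvester triangle $B_0$ by $(\beta_1,\delta_1-d)$, adjoins the origin, cancels the overlapping sub-triangles, and is left with the label-generating polynomials of two disjoint rectangles $X_0$ (size $\beta_1\times\delta_1$) and $X_1$ (size $(b-\beta_1)\times(d-\delta_1)$) inside the fundamental row abacus; non-cancellation is the injectivity of the labeling $\lambda:R\to\NN$, and part (b) is read off row by row. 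You instead isolate the single arithmetic identity $\beta_1 d+\delta_1 b=bd+1$ (your CRT argument for it is clean and correct), clear denominators to reduce part (a) to an eight-term polynomial identity that collapses under that relation, and then do the coefficient and $d$-multisection bookkeeping directly on the exponent sets $\{id+jb\}$ and $\{1+id+jb\}$ --- which is in effect an algebraic shadow of the paper's two rectangles, but derived without the abacus or Lemma \ref{lem:sylvester}. Your route is shorter and entirely self-contained; the small details that need care (distinctness of exponents within each product, the window argument $i_1-i_2\in[\beta_1+1-b,\beta_1-1]$ for non-cancellation, $\gamma_r\ge\gamma_1=b-\beta_1$ when $\delta_r\ge\delta_1$ so that $f_r$ is a polynomial) are all handled correctly. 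What the paper's approach buys is the geometric interpretation that the two monomial supports are literally rectangles in the double abacus adjacent to the Sylvester triangle, which is the picture the surrounding section is built on and which motivates the connection to the Frobenius coin problem; what yours buys is independence from that machinery and a one-line explanation ($\beta_1 d+\delta_1 b=bd+1$) of why the exponents pair up the way they do.
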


Before giving the proof, we would like to indicate that the expression in Theorem \ref{thm:interesting}a has a simple geometric meaning. Namely, let $X_0\subseteq R$ be the $\beta_1\times\delta_1$ rectangle of points in the fundamental row abacus whose bottom left point has label $0$, and let $X_1\subseteq R$ be the $(b-\beta_1)\times (d-\delta_1)$ rectangle of points whose bottom left point has label $1$. Because of the way that the labels $\lambda$ are defined we have
\begin{equation*}
\sum_{\x\in X_0} t^{\lambda(\x)} = [\beta_1]_{t^d}[\delta_1]_{t^b} \quad \text{ and } \quad \sum_{\x\in X_1} t^{\lambda(\x)} =  t[b-\beta_1]_{t^d} [d-\delta_1]_{t^b}.
\end{equation*}
Since the labeling $\lambda:R\to\NN$ is injective and since the rectangles $X_0,X_1$ are disjoint, there can be no cancellation between these two polynomials.

For example, consider Figure \ref{fig:abacus} where $(b,d)=(7,5)$. The relevant permutations are $(\beta_0,\ldots,\beta_6)=(0,3,6,2,5,1,4)$ and $(\delta_0,\ldots,\delta_4)=(0,3,1,4,2)$, hence $\beta_1=\delta_1=3$. The $\beta_1\times\delta_1=3\times 3$ rectangle $X_0$ contains the labels
\begin{equation*}
\lambda(X_0) = \{0,5,10, 7,12, 17, 14,19,24\},
\end{equation*}
while the rectangle $X_1$ of shape $(b-\beta_1)\times(d-\delta_1)=4\times 2$ contains the labels
\begin{equation*}
\lambda(X_1) = \{1,6,11,16 ,8,13,18,23\}.
\end{equation*}
Hence
\begin{align*}
[7]_{t^5}/[7]_t & = \sum_{\x\in X_0} t^{\lambda(\x)} - \sum_{\x \in X_1} t^{\lambda(\x)} \\
&=  [\beta_1]_{t^d}[\delta_1]_{t^b} -t [b-\beta_1]_{t^d} [d-\delta_1]_{t^b}\\
&= [3]_{t^5}[3]_{t^7} - t [4]_{t^5}[2]_{t^7}\\
&= 1+{t}^{5}+{t}^{7}+{t}^{10}+{t}^{12}+{t}^{14}+{t}^{17}+{t}^{19}+{t}^{24
}
\\
&\quad -t-{t}^{6}-{t}^{8}-{t}^{11}-{t}^{13}-{t}^{16}-{t}^{18}-{t}^{23}.
\end{align*}
We will give an example of Theorem \ref{thm:interesting}b at the end of this section.

\begin{proof}
We will apply Lemma \ref{lem:sylvester} and the double abacus. Throughout the proof we advise the reader to refer to Figure \ref{fig:two_triangles}. The key idea is that for any point $(x,y)\in A$ and $k,\ell\in\NN$, the generating polynomial for labels in the $k\times \ell$ rectangle $\{(x+k',y+\ell'): 0\leq k'\leq k-1, 0\leq \ell'\leq \ell-1\}$ is
\begin{align*}
\sum_{\substack{0\leq k'\leq k-1 \\ 0\leq \ell' \leq \ell-1}} t^{\lambda(x+k',y+\ell')} &= \sum_{\substack{0\leq k'\leq k-1 \\ 0\leq \ell' \leq \ell-1}} t^{(x+k')d+(y+\ell')b}\\
&= t^{xd+yb} \sum_{\substack{0\leq k'\leq k-1 \\ 0\leq \ell' \leq \ell-1}} t^{k'd+\ell 'b}\\
&= t^{xd+yb} \sum_{0\leq k'\leq k-1} t^{k'd} \sum_{0\leq \ell'\leq \ell-1} t^{\ell 'b}\\
&= t^{xd+yb} [k]_{t^d} [\ell]_{t^b}.
\end{align*}
If, furthermore, we have $k\leq b$ and $\ell\leq d$ then since $\gcd(b,d)=1$ the points of the $k\times \ell$ rectangle have distinct labels, hence this polynomial has coefficients in $\{0,1\}$.

\begin{figure}
\hspace{1in}\includegraphics{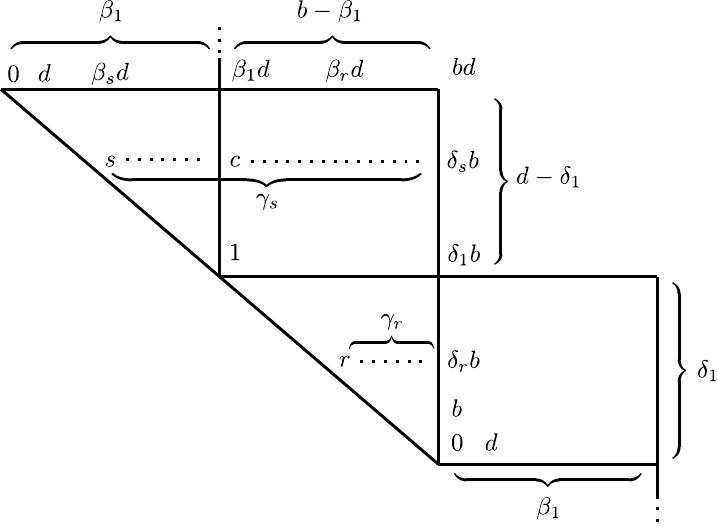}
\caption{The configuration in the proof of Theorem \ref{thm:interesting}}
\label{fig:two_triangles}
\end{figure}

We will prove part (a) by showing that $[b]_{t^d}/[b]_t$ has the form
\begin{equation*}
[b]_{t^d}/[b]_t = \sum_{\x\in X_0} t^{\lambda(\x)} - \sum_{\x\in X_1} t^{\lambda(\x)},
\end{equation*}
where $X_0$ is a $\beta_1\times\delta_1$ rectangle whose bottom left point has label $0$, and $X_1$ is a $(b-\beta_1)\times (d-\delta_1)$ rectangle disjoint from $X_0$ whose bottom left point has label $1$. To show this, we recall that the Sylvester coin polynomial has the form
\begin{equation*}
\sum_{\x\in B_0} t^{\lambda(\x)},
\end{equation*}
where the ``zeroth block'' $B_0$ is the triangular set of points weakly to the left and below the point $(-1,d-1)$. From the definition of the numbers $\beta_i$ and $\delta_i$ we note that the label $1$ occurs at the lowest point of the column containing label $\beta_1 d$ and at the leftmost point of the row containing label $\delta_1 b$. That is, the label $1$ occurs at point $(-b+\beta_1,\delta_1)$. Since $\lambda(b,-d)=0$ we also have $\lambda(\beta_1,\delta_1-d)=\lambda((-b+\beta_1)+b,(\delta_1)-d)=\lambda(-b+\beta_1,\delta_1)=1$. Define the shifted triangle
\begin{equation*}
B_0':= B_0+(\beta_1,\delta_1-d) = \{(x,y)+(\beta_1,\delta_1-d): (x,y)\in B_0\}.
\end{equation*}
We note that $B_0'$ contains every point of the triangle weakly to the left and below the point $(-1,d-1)+(\beta_1,\delta_1-d)=(\beta_1-1,\delta_1-1)$, {\bf except for the point} $(0,0)$. Indeed, since
\begin{equation*}
\lambda(x+\beta_1,y+\delta_1-d)=\lambda(x,y)+\lambda(\beta_1,\delta_1-d)=\lambda(x,y)+1,
\end{equation*}
we see that each label in $B_0'$ is one larger than the corresponding label in $B_0$, so the label $0$ does not occur. Since the labels in $B_0$ are the Sylvester set we know from Lemma \ref{lem:sylvester} that
\begin{align*}
[b]_{t^d}/[b]_t &= 1+(t-1)\sum_{\x\in B_0} t^{\lambda(\x)}\\
&= 1+\sum_{\x\in B_0} t^{\lambda(\x)+1}-\sum_{\x\in B_0} t^{\lambda(\x)}\\
&= 1+\sum_{\x\in B_0'} t^{\lambda(\x)} - \sum_{\x\in B_0} t^{\lambda(\x)}\\
&= \sum_{\x\in B_0'\cup\{(0,0)\}} t^{\lambda(\x)} - \sum_{\x\in B_0} t^{\lambda(\x)}\\
&= \sum_{\x\in B_0''} t^{\lambda(\x)} - \sum_{\x\in B_0} t^{\lambda(\x)},
\end{align*}
where $B_0'':=B_0'\cup\{(0,0)\}$. This can be simplified by observing that the triangular shapes $B_0$ and $B_0''$ are not disjoint. Their intersection is the smaller triangle $T_1$ below and to the left of the point $(-1,\delta_1-1)$. (This is the triangle containing the label $r$ in Figure \ref{fig:two_triangles}.) Hence
\begin{align*}
[b]_{t^d}/[b]_t = \sum_{\x\in B_0''\setminus T_1} t^{\lambda(\x)} - \sum_{\x\in B_0\setminus T_1} t^{\lambda(\x)}.
\end{align*}
This can be simplified still further. Let $T_2$ be the triangle below and to the left of the point $(-b+\beta_1-1,d-1)$. (This is the triangle containing the label $s$ in Figure \ref{fig:two_triangles}.) Let $X_1$ be the rectangle of size $(b-\beta_1)\times (d-\delta_1)$ with bottom left corner $(-b+\beta_1,\delta_1)$ and let $X_0$ be the $\beta_1\times\delta_1$ rectangle with bottom left corner $(0,0)$. Then we have $B_0=T_1\cup X_1\cup T_2$ and $B_0''=T_1\cup X_0\cup (T_2+(b,-d))$. But the labels in the triangles $T_2$ and $T_2+(b,-d)$ are the same. Hence
\begin{align*}
[b]_{t^d}/[b]_t &= \sum_{\x\in B_0''} t^{\lambda(\x)} - \sum_{\x\in B_0} t^{\lambda(\x)}\\
&= \sum_{\x\in X_0} t^{\lambda(\x)} - \sum_{\x\in X_1} t^{\lambda(\x)}.
\end{align*}
This completes the proof of part (a). To prove part (b), observe that the terms $t^{\lambda(\x)}$ in $[b]_{t^d}/[b]_t$ with $\lambda(\x)\equiv r$ mod $d$ come from the points $\x$ in the row containing label $r$, i.e., in the row containing label $\delta_r b$. Let $r\in\{0,\ldots,d-1\}$ and define $\gamma_r:=(\delta_r b-r)/d\in\NN$. This is the ``width'' of the triangle $B_0$ in the row containing label $r$, as in Figure \ref{fig:two_triangles}. If $\delta_r<\delta_1$ then the row containing $r$ intersects the rectangle $X_0$ at the sequence of points with labels $\delta_rb, \delta_rb+d,\ldots, \delta_rb+(\beta_1-1)d$. The sum of $t^{\lambda(\x)}$ over these points is
\begin{equation*}
\sum_{k=0}^{\beta_1-1} t^{\delta_r b+kd} = t^{\delta_r b} [\beta_1]_{t^d} = t^r (t^d)^\gamma_r [\beta_1]_{t^d}.
\end{equation*}
Hence
\begin{equation*}
\sum_{\x\in X_0} t^{\lambda(\x)} = \sum_{\substack{0\leq r\leq d-1 \\ \delta_r<\delta_1}} t^r(t^d)^{\gamma_r}[\beta_1]_{t^d}.
\end{equation*}
If $\delta_r\geq \delta_1$ then the row containing $r$ intersects the rectangle $X_1$ at the sequence of points with labels $c, c+d, \ldots, c+(b-\beta_1-1)d$, where $c=r+(\gamma_r-b+\beta_1)d$. (In Figure \ref{fig:two_triangles} we have used the letter $s$ for this case instead of $r$.) The sum of $t^{\lambda(\x)}$ over these points is
\begin{equation*}
\sum_{k=0}^{b-\beta_1-1} t^{c+kd} = t^c [b-\beta_1]_{t^d} = t^r (t^d)^{\gamma_r-b+\beta_1} [b-\beta_1]_{t^d}.
\end{equation*}
Hence
\begin{equation*}
\sum_{\x\in X_1} t^{\lambda(\x)} = \sum_{\substack{0\leq r\leq d-1 \\ \delta_r\geq \delta_1}}  t^r (t^d)^{\gamma_r-b+\beta_1} [b-\beta_1]_{t^d}.
\end{equation*}
\end{proof}

Finally, we apply Theorem \ref{thm:interesting} to obtain a more precise formula for the integers $\omega^k\sqbinom{n}{k}_\omega^{(b)}$ when $\omega^{n+1}=1$. In particular, this formula determines the sign and vanishing of these integers.

\begin{corollary}\label{cor:interesting}
Let $\omega$ be a primitive $d$th root of unity with $d|(n+1)$ and let $\gcd(b,d)=1$. Then for any $r\in\{0,\ldots,d-1\}$ we have
\begin{equation*}
\sum_{k\geq 0} \omega^{r+dk}\sqbinom{n}{r+kd}^{(b)}_\omega t^{r+dk}= t^r g_r(t^d),
\end{equation*}
where
\begin{equation*}
g_r(t) = \begin{cases}
t^{\gamma_r}[\beta_1]_t [b]_t^{(n+1)/d-1}& \delta_r<\delta_1,\\
-t^{\gamma_r-b+\beta_1} [b-\beta_1]_t[b]_t^{(n+1)/d-1} & \delta_r\geq \delta_1.
\end{cases}
\end{equation*}
We observe that each $g_r(t)$ is (plus or minus) a unimodal polynomial since products of $t$-integers are unimodal; see \cite[Theorem 3.9]{andrews}.
\end{corollary}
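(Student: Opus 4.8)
The plan is to read off the $d$-multisection of the generating function in Theorem~\ref{thm:main}c using the multisection formula already established in Theorem~\ref{thm:interesting}b; the one conceptual point is that the extra factor appearing in Theorem~\ref{thm:main}c depends only on $t^d$ and so distributes across the multisection without mixing residue classes modulo $d$. First I would start from the $\gcd(b,d)=1$ case of Theorem~\ref{thm:main}c and peel off a single copy of $[b]_{t^d}$:
\[
\sum_{k\geq 0}\omega^k\sqbinom{n}{k}_\omega^{(b)}t^k = \frac{1}{[b]_t}[b]_{t^d}^{(n+1)/d} = \frac{[b]_{t^d}}{[b]_t}\cdot[b]_{t^d}^{(n+1)/d-1}.
\]
Since $d\mid(n+1)$ we have $(n+1)/d\geq 1$, so the trailing factor is a genuine polynomial, and crucially it is a polynomial in the single variable $t^d$: writing $P(u):=[b]_u^{(n+1)/d-1}$, we have $[b]_{t^d}^{(n+1)/d-1}=P(t^d)$.

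Next I would substitute the expansion $[b]_{t^d}/[b]_t=\sum_{r=0}^{d-1}t^r f_r(t^d)$ from Theorem~\ref{thm:interesting}b and distribute $P(t^d)$ term by term. Because $f_r(t^d)P(t^d)=(f_r\cdot P)(t^d)$, this gives
\[
\sum_{k\geq 0}\omega^k\sqbinom{n}{k}_\omega^{(b)}t^k=\sum_{r=0}^{d-1}t^r\,g_r(t^d),\qquad g_r:=f_r\cdot[b]_t^{(n+1)/d-1}.
\]
Substituting the two cases of $f_r$ from Theorem~\ref{thm:interesting}b then reproduces, in one line each, the two displayed cases of $g_r$ in the statement.

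Then I would extract the $r$-th multisection. Every exponent appearing in $t^r g_r(t^d)$ is congruent to $r$ modulo $d$, and the residues $0,1,\dots,d-1$ are pairwise distinct, so collecting the monomials of $\sum_k\omega^k\sqbinom{n}{k}_\omega^{(b)}t^k$ whose exponent is $\equiv r\pmod d$ recovers exactly $t^r g_r(t^d)$, which is the claimed identity. For the closing unimodality remark, I would note that each $g_r$ is, up to an overall sign and a monomial prefactor, a product of $t$-integers (namely $[\beta_1]_t$ or $[b-\beta_1]_t$ together with $[b]_t^{(n+1)/d-1}$), so the assertion follows from \cite[Theorem~3.9]{andrews}.

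I do not anticipate a genuine obstacle: the argument is essentially algebraic bookkeeping, and the single idea that makes it work is recognizing that the surviving factor $[b]_{t^d}^{(n+1)/d-1}$ depends only on $t^d$ and therefore slots into each multisection $t^r f_r(t^d)$ without disturbing the residue structure set up in Theorem~\ref{thm:interesting}b. The mild care needed is only to confirm that $(n+1)/d-1\geq 0$ so that the extra factor is a polynomial, which is immediate from $d\mid(n+1)$.
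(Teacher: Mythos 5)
Your proposal is correct and follows essentially the same route as the paper: the paper's proof is exactly the two-line observation that the generating function from Theorem~\ref{thm:main}c factors as $\frac{[b]_{t^d}}{[b]_t}\,[b]_{t^d}^{(n+1)/d-1}$ and then invokes Theorem~\ref{thm:interesting}b. You have merely spelled out the bookkeeping (the factor $[b]_{t^d}^{(n+1)/d-1}$ being a polynomial in $t^d$, hence preserving residue classes mod $d$) that the paper leaves implicit, and this is all accurate.
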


\begin{proof}
Recall from Theorem \ref{thm:main}c that we have
\begin{equation*}
\sum_{k= 0}^{(b-1)n} \omega^k\sqbinom{n}{k}^{(b)}_\omega t^k = \frac{[b]_{t^d}}{[b]_t} [b]_{t^d}^{(n+1)/d-1}.
\end{equation*}
Now use Theorem \ref{thm:interesting}b.
\end{proof}

To end this section, we complete our example with $(b,d)=(7,5)$. Recall that the relevant permutations are $(\beta_0,\ldots,\beta_6)=(0,3,6,2,5,1,4)$ and $(\delta_0,\ldots,\delta_4)=(0,3,1,4,2)$, hence $\beta_1=3$ and $b-\beta_1=4$. To compute the $5$-multisections of $[7]_{t^5}/[7]_t$ we note that the numbers $\gamma_r=(\delta_r b-r)/d$ are $(\gamma_0,\ldots,\gamma_4)=(0,4,1,5,2)$, which are the lengths of the rows in the fundamental triangle $B_0$ (including the empty bottom row), permuted according to congruence class mod $d=5$. Thus we have
\begin{align*}
[7]_{t^5} / [7]_t &= \sum_{\substack{0\leq r\leq 4 \\ \delta_r<3}} t^r(t^5)^{\gamma_r}[3]_{t^5} - \sum_{\substack{0\leq r\leq 4 \\ \delta_r\geq 3}}  t^r (t^5)^{\gamma_r-4} [4]_{t^5}  \\
&= t^0[3]_{t^5} -t^1[4]_{t^5} +t^2(t^5)^1[3]_{t^5}-t^3(t^5)^1[4]_{t^5}+t^4(t^5)^2[3]_{t^5}.
\end{align*}
Finally, if $\omega$ is a $d=5$th root of unity and $5|(n+1)$, then we have
\begin{equation*}
\sum_{k= 0}^{6n} \omega^k\sqbinom{n}{k}^{(7)}_\omega t^k = \frac{[7]_{t^5}}{[7]_t} [7]_{t^5}^{(n+1)/5-1},
\end{equation*}
and hence
\begin{equation*}
\sum_{k\geq 0} \omega^{r+5k}\sqbinom{n}{r+5k}^{(7)}_\omega t^{r+5k}= 
\begin{cases}
[3]_{t^5} [7]_{t^5}^{(n+1)/5-1} & r=0,\\
-t^1[4]_{t^5} [7]_{t^5}^{(n+1)/5-1} & r=1,\\
t^2(t^5)^1[3]_{t^5} [7]_{t^5}^{(n+1)/5-1} & r=2,\\
-t^3(t^5)^1[4]_{t^5} [7]_{t^5}^{(n+1)/5-1} & r=3,\\
 t^4(t^5)^2[3]_{t^5} [7]_{t^5}^{(n+1)/5-1} & r=4.
\end{cases}
\end{equation*}
In particular, we have $\sqbinom{n}{k}^{(7)}_\omega\geq 0$ when $\text{($k$ mod $5$)}\in \{0,2,4\}$ and $\sqbinom{n}{k}^{(7)}_\omega\leq  0$ when  $\text{($k$ mod $5$)}\in \{1,3\}$. The criterion for $\sqbinom{n}{k}^{(7)}_\omega= 0$ is a bit complicated to state.
 
\section{Other Symmetric Polynomials}\label{sec:other}

In this paper we have studied the principal specialization of the $b$-bounded symmetric polynomials $h_k^{(b)}$ at roots of unity. It may be interesting to compare these specializations to the expansion of $h_k^{(b)}$ in various bases for the ring of symmetric polynomials. Here we follow the standard notation from Macdonald \cite{macdonald}.

An {\em integer partition} is a sequence $\lambda=(\lambda_1,\lambda_2,\ldots,)\in\NN^\infty$ satisfying $\lambda_1\geq \lambda_2\geq \cdots \geq 0$, where only finitely many entries are non-zero. We define the {\em weight of $\lambda$} (also called {\em size}) by $|\lambda|=\sum_i \lambda_i$. If $|\lambda|=k$ then we say that $\lambda$ is a {\em partition of $k$} (another notation is $\lambda\vdash k$). We let $l(\lambda)$ denote the maximum $\ell$ such that $\lambda_\ell\neq 0$, which is called the {\em length of $\lambda$}. We write $m_i(\lambda)=\#\{j:\lambda_j=i\}$ (or just $m_i$ when $\lambda$ is understood) for the number of parts of $\lambda$ equal to $i$. We can also express this colloquially as $\lambda=(1^{m_1}2^{m_2}\cdots)$. Note that $|\lambda|=\sum_i im_i$. The following quantity also shows up frequently:\footnote{The integer $z_\lambda$ has nothing to do with the variables $z_i$.}
\begin{equation*}
z_\lambda = \prod_{i\geq 1} i^{m_i}\cdot m_i!.
\end{equation*}
Its significance is the fact that for $|\lambda|=n$ the number of permutations of $\{1,2,\ldots,n\}$ with cycle type $\lambda$ is $n!z_\lambda^{-1}$. For any integer partition $\lambda$ we define the corresponding elementary and complete symmetric polynomials
\begin{align*}
e_\lambda(z_1,\ldots,z_n) &:= \prod_i e_{\lambda_i}(z_1,\ldots,z_n),\\
h_\lambda(z_1,\ldots,z_n) &:= \prod_i h_{\lambda_i}(z_1,\ldots,z_n),
\end{align*}
with the convention that $e_0=h_0=1$. By analogy we define the $b$-bounded polynomials
\begin{equation*}
h_\lambda^{(b)}(z_1,\ldots,z_n) :=\prod_i h_{\lambda_i}^{(b)}(z_1,\ldots,z_n),
\end{equation*}
so that $h_\lambda^{(2)}=e_\lambda$ and $h_\lambda^{(\infty)}=h_\lambda$. Furthermore, we define the {\em power sum symmetric polynomials} $p_\lambda(z_1,\ldots,z_n):=\prod_i p_{\lambda_i}(z_1,\ldots,z_n)$, where
\begin{equation*}
p_k(z_1,\ldots,z_n)=z_1^k+\cdots+z_n^k,
\end{equation*}
and $p_0=1$. The Fundamental Theorem of Symmetric Polynomials says that the set $\{e_\lambda\}$ is a $\ZZ$-linear basis for the ring $\Lambda_n=\ZZ[z_1,\ldots,z_n]^{S_n}$ of symmetric polynomials with integer coefficients. It is also true that $\{h_\lambda\}$ is a $\ZZ$-linear basis for $\Lambda_n$, while the set $\{p_\lambda\}$ is a $\QQ$-linear basis for $\Lambda_n\otimes \QQ$. The expansions of $e_k$ and $h_k$ in terms of $p_\lambda$ are called ``Newton's identities'' \cite[Equation 2.14$'$]{macdonald}:
\begin{align*}
h_k(z_1,\ldots,z_n) &=\sum_{|\lambda|=k} z_\lambda^{-1} p_\lambda(z_1,\ldots,z_n),\\
e_k(z_1,\ldots,z_n) &=\sum_{|\lambda|=k} z_\lambda^{-1}(-1)^{|\lambda|-l(\lambda)}p_\lambda(z_1,\ldots,z_n).
\end{align*}
The generalization of these formulas to $b$-bounded symmetric polynomials follows from Proposition 3.15 and Remark 3.16(2) in Doty and Walker \cite{dw}, which they attribute to Macdonald. We think it is interesting to reproduce the proof here. Note that Doty and Walker did not consider the specialization $z_i=1$. In Corollary \ref{cor:hbpow} we will consider the specializations $z_i=\omega^{i-1}$ and $z_i=\omega^i$ for roots of unity $\omega$.

\begin{proposition}\label{prop:hbpow}
Let $b,n,k\in\NN$ with $b\geq 2$. For an integer partition $\lambda$ let $l_b(\lambda)$ denote the number of parts of $\lambda$ that are divisible by $b$. Then we have
\begin{equation*}
h_k^{(b)}(z_1,\ldots,z_n) = \sum_{|\lambda|=k} z_\lambda^{-1}(1-b)^{l_b(\lambda)} p_\lambda(z_1,\ldots,z_n),
\end{equation*}
and substituting $z_i=1$ for all $i$ gives
\begin{equation*}
\binom{n}{k}^{(b)} = \sum_{|\lambda|=k} z_\lambda^{-1}(1-b)^{l_b(\lambda)}n^{l(\lambda)}.
\end{equation*}
\end{proposition}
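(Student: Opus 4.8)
The plan is to work with generating functions and logarithms, following the classical derivation of Newton's identities. Recall that $H(t;z_1,\ldots,z_n)=\prod_i(1-z_it)^{-1}$; taking the logarithm and expanding each factor as a geometric series gives
\[
\log H(t)=\sum_{i}\sum_{r\geq 1}\frac{(z_it)^r}{r}=\sum_{r\geq 1}\frac{p_r}{r}\,t^r,
\]
where $p_r=p_r(z_1,\ldots,z_n)$. This is the generating-function form of Newton's identity for $h_k$, and exponentiating it recovers the expansion of $h_k$ in power sums stated just above the proposition.

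First I would rewrite $H^{(b)}$ via Lemma \ref{lem:gf}, which gives $H^{(b)}(t)=E(-t^b;z_1^b,\ldots,z_n^b)\,H(t)$. The elementary factor is $E(-t^b;z_1^b,\ldots,z_n^b)=\prod_i(1-z_i^bt^b)$, so taking logarithms and using $\sum_i z_i^{br}=p_{br}$ yields $\log E(-t^b;z_1^b,\ldots,z_n^b)=-\sum_{r\geq 1}p_{br}\,t^{br}/r$. Reindexing this sum by $m=br$, so that $1/r=b/m$, turns it into $-\sum_{b\mid m}b\,p_m t^m/m$. Adding the two logarithms, the coefficient of $p_r t^r/r$ in $\log H^{(b)}(t)$ therefore equals $1$ when $b\nmid r$ and equals $1-b$ when $b\mid r$.

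Next I would exponentiate and extract the coefficient of $t^k$ using the standard multinomial identity
\[
\exp\left(\sum_{r\geq 1}\frac{a_r}{r}\,t^r\right)=\sum_{\lambda}z_\lambda^{-1}\Big(\prod_i a_{\lambda_i}\Big)t^{|\lambda|},
\]
which holds because expanding $\prod_r\exp(a_rt^r/r)$ and collecting the term with $m_r$ factors of index $r$ produces exactly $z_\lambda^{-1}=\prod_r (r^{m_r}m_r!)^{-1}$. Setting $a_r$ equal to $p_r$ or $(1-b)p_r$ according to the previous step, the product over the parts of $\lambda$ picks up one factor $(1-b)$ for each part divisible by $b$, so $\prod_i a_{\lambda_i}=(1-b)^{l_b(\lambda)}p_\lambda$. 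Reading off the coefficient of $t^k$ gives the first displayed formula.

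Finally, to obtain the specialization I would set $z_i=1$ for all $i$, so that $p_r(1,\ldots,1)=n$ for every $r$ and hence $p_\lambda=n^{l(\lambda)}$, while $h_k^{(b)}(1,\ldots,1)=\binom{n}{k}^{(b)}$ by definition. Substituting yields the second formula. I expect the only point requiring care to be the reindexing $m=br$ that attaches the factor $b$ to the power sums indexed by multiples of $b$; the rest is routine bookkeeping with the exponential formula.
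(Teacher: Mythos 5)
Your proof is correct and follows essentially the same route as the paper's: take the logarithm of $H^{(b)}(t)=\prod_i\frac{1-z_i^bt^b}{1-z_it}$, observe that the coefficient of $p_rt^r/r$ is $1-b$ when $b\mid r$ and $1$ otherwise (the paper packages this as the modular power sum $p_r^{(b)}$), and exponentiate via the standard $z_\lambda^{-1}$ formula before specializing $z_i=1$. No gaps.
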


For example, we consider the case $(n,k)=(3,3)$ for various $b$. The partitions of $k=3$ are $\lambda\in\{(3),(2,1),(1,1,1)\}$, with $z_{(3)}=3$, $z_{(2,1)}=2$ and $z_{(1,1,1)}=6$. For any $b>3$  we have $l_b(\lambda)=0$ for all $|\lambda|=3$, hence
\begin{equation*}
\binom{3-1+3}{3}=\binom{3}{3}^{(b)}=\frac{(3)^1(1-b)^0}{3}+\frac{(3)^2(1-b)^0}{2}+\frac{(3)^3(1-b)^0}{6}=10.
\end{equation*}
When $b=3$ we have $l_3(3)=1$, $l_3(2,1)=0$ and $l_3(1,1,1)=0$, hence
\begin{equation*}
\binom{3}{3}^{(3)}=\frac{(3)^1(-2)^1}{3}+\frac{(3)^2(-2)^0}{2}+\frac{(3)^3(-2)^0}{6}=7.
\end{equation*}
When $b=2$ we have $l_2(3)=0$, $l_2(2,1)=1$ and $l_2(1,1,1)=0$, hence
\begin{equation*}
\binom{3}{3}=\binom{3}{3}^{(2)}=\frac{(3)^1(-1)^0}{3}+\frac{(3)^2(-1)^1}{2}+\frac{(3)^3(-1)^0}{6}=1.
\end{equation*}
When $b>k$ we note that $l_b(\lambda)=0$ for all $|\lambda|=k$ and when $b=2$ one can check that $|\lambda|-l(\lambda)$ and $l_2(\lambda)$ have the same parity, so these cases reduce to the classical Newton identities.

The key idea in the proof is the introduction of the polynomials $p_k^{(b)}$. Doty and Walker called these {\em modular power sums} and they defined them in a slightly different way. Walker \cite{walker} says that the formula relating $p_k^{(b)}$ and $p_k$ is a ``surprising observation'' of Macdonald. 

\begin{proof}
Recall the generating function
\begin{equation*}
H^{(b)}(t;z_1,\ldots,z_n)=\sum_{k\geq 0} h_k^{(b)}(z_1,\ldots,z_n) t^k = \prod_{i=1}^{n} \frac{1-z_i^b t^b}{1-z_i t}.
\end{equation*}
Taking logarithms gives
\begin{align*}
\log H^{(b)}(t;z_1,\ldots,z_n) &= \sum_{i=1}^n \log(1-z_i^b t^b) - \sum_{i=1}^n \log(1-z_i t)\\
&= -\sum_{i=1}^n \sum_{k\geq 1} \frac{1}{k} z_i^{kb} t^{kb} + \sum_{i=1}^n \sum_{k\geq 1} \frac{1}{k} z_i^k t^k\\
&= -\sum_{k\geq 1} \frac{1}{k} p_{kb}(z_1,\ldots,z_n) t^{kb} + \sum_{k\geq 1}\frac{1}{k} p_k(z_1,\ldots,z_n)t^k.
\end{align*}
It is convenient to write this last expression as
\begin{equation*}
\sum_{k\geq 1} \frac{1}{k} p_k^{(b)}(z_1,\ldots,z_n) t^k,
\end{equation*}
where we define
\begin{equation*}
p_k^{(b)}(z_1,\ldots,z_n) = \begin{cases}
(1-b)p_k(z_1,\ldots,z_n) & b|k,\\
p_k(z_1,\ldots,z_n) & b\nmid k.\\
\end{cases}
\end{equation*}
For any integer partition $\lambda$ we also write
\begin{equation*}
p_\lambda^{(b)}(z_1,\ldots,z_n)=\prod_i p_{\lambda_i}^{(b)}(z_1,\ldots,z_n)=(1-b)^{l_b(\lambda)}p_\lambda(z_1,\ldots,z_n).
\end{equation*}
Then taking exponentials gives\footnote{We omit variables to save space.}
\begin{align*}
H^{(b)}(t;z_1,\ldots,z_n) &= \prod_{i\geq 1} \exp\left(\frac{p_i^{(b)}t^i}{i}\right)\\
&= \prod_{i\geq 1} \sum_{m_i\geq 0} \frac{1}{m_i!} \left(\frac{p_i^{(b)}t^i}{i}\right)^{m_i}\\
&= \prod_{i\geq 1} \sum_{m_i \geq 0} \frac{1}{i^{m_i}\cdot m_i!}(p_i^{(b)})^{m_i} t^{im_i}\\
&= \sum_{k\geq 0} \sum_{|\lambda|=k} z_\lambda^{-1} p_\lambda^{(b)} t^k\\
&= \sum_{k\geq 0} \sum_{|\lambda|=k}  z_\lambda^{-1}(1-b)^{l_b(\lambda)} p_\lambda t^k.
\end{align*}
\end{proof}

Let us see what happens to Proposition \ref{prop:hbpow} under specialization at roots of unity. The specialization of the power sums is easy, so we omit the proof.

\begin{lemma}\label{lem:pow}
Let $\omega$ be a primitive $d$th root of unity. For an integer partition $\lambda$ recall that $l_d(\lambda)$ is the number of parts of $\lambda$ divisible by $d$.
\begin{enumerate}
\item If $d|n$ then we have
\begin{equation*}
p_k(\omega,\omega^2,\ldots,\omega^n)=p_k(1,\omega,\ldots,\omega^{n-1}) = \begin{cases} n & d|k,\\ 0 & d\nmid k,\end{cases}
\end{equation*}
and hence
\begin{equation*}
p_\lambda(\omega,\omega^2,\ldots,\omega^n)=p_\lambda(1,\omega,\ldots,\omega^{n-1}) = \begin{cases} n^{l(\lambda)} & d|\lambda_i \text{ for all $i$},\\ 0 & \text{else}.\end{cases}
\end{equation*}
\item If $d|(n-1)$ then we have
\begin{equation*}
p_k(1,\omega,\ldots,\omega^{n-1}) = \begin{cases} n & d|k,\\ 1 & d\nmid k,\end{cases}
\end{equation*}
and hence $p_\lambda(1,\omega,\ldots,\omega^{n-1})=n^{l_d(\lambda)}$.
\item If $d|(n+1)$ then we have
\begin{equation*}
p_k(\omega,\omega^2,\ldots,\omega^n) = \begin{cases} n & d|k,\\ -1 & d\nmid k,\end{cases}
\end{equation*}
and hence $p_\lambda(\omega,\omega^2,\ldots,\omega^n)=n^{l_d(\lambda)}(-1)^{l(\lambda)-l_d(\lambda)}=(-n)^{l_d(\lambda)}(-1)^{l(\lambda)}$.
\end{enumerate}
\end{lemma}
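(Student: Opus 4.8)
The plan is to reduce every computation to the elementary fact that a finite geometric series in the quantity $\omega^k$ either collapses to a count of its terms (when $\omega^k=1$) or telescopes to a ratio (when $\omega^k\neq 1$). For the principal specialization I would write
\[
p_k(1,\omega,\ldots,\omega^{n-1}) = \sum_{i=0}^{n-1}\omega^{ik} = \sum_{i=0}^{n-1}(\omega^k)^i,
\]
and for the shifted specialization
\[
p_k(\omega,\ldots,\omega^n) = \sum_{i=1}^{n}\omega^{ik} = \sum_{i=1}^{n}(\omega^k)^i.
\]
Since $\omega$ is a primitive $d$th root of unity, $\omega^k=1$ exactly when $d\mid k$, and in that case every summand equals $1$, giving $n$ for both sums (each has $n$ terms). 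This settles the $d\mid k$ line in all three parts at once.

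When $d\nmid k$ the base $\omega^k\neq 1$ and I would apply the geometric series formula, so that for the principal specialization $\sum_{i=0}^{n-1}(\omega^k)^i = (\omega^{kn}-1)/(\omega^k-1)$. The three hypotheses now enter only through the value of $\omega^{kn}$. In part (a), $d\mid n$ forces $\omega^{kn}=(\omega^n)^k=1$, so the ratio is $0$; moreover, since $\omega^n=1$ we have $(\omega^k)^n=(\omega^k)^0$, which lets me reindex $\sum_{i=1}^n=\sum_{i=0}^{n-1}$ and conclude $p_k(\omega,\ldots,\omega^n)=p_k(1,\ldots,\omega^{n-1})$, as claimed. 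In part (b), $n\equiv 1\pmod d$ gives $\omega^{kn}=\omega^k$, so the ratio is $(\omega^k-1)/(\omega^k-1)=1$. For part (c) I would instead use the shifted sum, rewriting $\sum_{i=1}^n(\omega^k)^i = \bigl(\sum_{i=0}^n(\omega^k)^i\bigr)-1$; since $d\mid(n+1)$ gives $\omega^{k(n+1)}=1$, the length-$(n+1)$ geometric sum vanishes, leaving $p_k=-1$.

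For the formulas for $p_\lambda$, I would simply multiply the single-part evaluations over the parts of $\lambda$, using $p_\lambda=\prod_i p_{\lambda_i}$. In part (a) the product is $0$ as soon as some part is not divisible by $d$ and is $n^{l(\lambda)}$ otherwise; in part (b) each part contributes $n$ or $1$, yielding $n^{l_d(\lambda)}$; in part (c) each part contributes $n$ or $-1$, yielding $n^{l_d(\lambda)}(-1)^{l(\lambda)-l_d(\lambda)}$, and the alternate form $(-n)^{l_d(\lambda)}(-1)^{l(\lambda)}$ follows from $(-1)^{-l_d(\lambda)}=(-1)^{l_d(\lambda)}$.

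There is no serious obstacle here: the argument is a one-line geometric series in each case. The only points that demand a little care are the bookkeeping of the index range ($\sum_{i=0}^{n-1}$ versus $\sum_{i=1}^{n}$) together with the justification that the two specializations agree in part (a), and the sign rearrangement in part (c). These are precisely the places where the distinction between $d\mid n$, $d\mid(n-1)$ and $d\mid(n+1)$ manifests, so I would state them explicitly rather than fold them into ``clearly.''
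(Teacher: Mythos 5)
Your argument is correct: each evaluation is a finite geometric series in $\omega^k$, handled by cases on whether $\omega^k=1$, and the product formulas for $p_\lambda$ follow immediately. The paper omits the proof entirely (``the specialization of the power sums is easy, so we omit the proof''), and your computation is exactly the standard argument being alluded to, with the index-range bookkeeping and the part-(c) sign rearrangement correctly attended to.
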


\begin{corollary}\label{cor:hbpow}
Let $\omega$ be a primitive $d$th root of unity.
\begin{enumerate}
\item If $d|n$ then combining Proposition \ref{prop:hbpow} with Lemma \ref{lem:pow}a gives
\begin{equation*}
\sqbinom{n}{k}_\omega^{(b)}= \sum_{\substack{ |\lambda|=k \\ d|\lambda_i \text{ for all } i}} z_\lambda^{-1} (1-b)^{l_b(\lambda)}n^{l(\lambda)}.
\end{equation*}
For each term in the sum we can write $\lambda=d\mu$ for some partition $\mu\vdash k/d$. Then using the facts $l(\lambda)=l(\mu)$ and $z_\lambda=d^{l(\mu)}z_\mu$ gives
\begin{equation*}
\sqbinom{n}{k}_\omega^{(b)}= \sum_{|\mu|=k/d} z_\mu^{-1} (1-b)^{l_b(d\mu)}(n/d)^{l(\mu)}.
\end{equation*}
In the special case $\gcd(b,d)=1$ we also have $l_b(d\mu)=l_b(\mu)$, and hence
\begin{equation*}
\sqbinom{n}{k}_\omega^{(b)}= \sum_{|\mu|=k/d} z_\mu^{-1} (1-b)^{l_b(\mu)}(n/d)^{l(\mu)}=\binom{n/d}{k/d}^{(b)},
\end{equation*}
which recovers the identity from Theorem \ref{thm:main}a.
\item If $d|(n-1)$ then we have
\begin{equation*}
\sqbinom{n}{k}_\omega^{(b)}= \sum_{|\lambda|=k} z_\lambda^{-1} (1-b)^{l_b(\lambda)}n^{l_d(\lambda)}.
\end{equation*}
\item If $d|(n+1)$ then we have
\begin{equation*}
\omega^k \sqbinom{n}{k}_\omega^{(b)} = \sum_{|\lambda|=k} z_\lambda^{-1}(1-b)^{l_b(\lambda)}(-1)^{l(\lambda)-l_d(\lambda)}  n^{l_d(\lambda)}.
\end{equation*}
\end{enumerate}
\end{corollary}

To end this paper we also consider the expansion of $h_k^{(b)}$ in the bases $e_\lambda$, $m_\lambda$ and $s_\lambda$. The {\em monomial symmetric polynomials} $m_\lambda(z_1,\ldots,z_n)$ are defined as follows. Let $\lambda$ be a partition of length $l(\lambda)=\ell\leq n$. We will write $\lambda=(\lambda_1,\ldots,\lambda_\ell,\lambda_{\ell+1},\ldots,\lambda_n)$ with $\lambda_{\ell+1}=\lambda_{\ell+2}=\cdots=\lambda_n=0$. Suppose that $\lambda=(0^{m_0}1^{m_1}2^{m_2}\cdots)$, i.e., that $\lambda$ has $m_i$ parts equal to $i$. In particular, we have $m_0=n-\ell$ and $\sum_i m_i=n$. Then we define\footnote{The use of the same letter $m$ for the unrelated $m_\lambda$ and $m_i$ is unfortunate.}
\begin{equation*}
m_{\lambda}(z_1,\ldots,z_n):= \frac{1}{\prod_{i=0}^n m_i!}\,\sum_{\pi\in S_n} z_1^{\lambda_{\pi(1)}}z_2^{\lambda_{\pi(2)}}\cdots z_n^{\lambda_{\pi(n)}} \in\ZZ[z_1,\ldots,z_n].
\end{equation*}
This can be more simply described as the sum over all distinct monomials in the variables $z_1,\ldots,z_n$ whose coefficients are a rearrangement of the list $\lambda_1,\ldots,\lambda_n$. For example, we have
\begin{align*}
m_{(2,2)}(z_1,z_2,z_3)=m_{(2,2,0)}(z_1,z_2,z_3)&=z_1^2z_2^2+z_1^2z_3^2+z_2^2z_3^2,\\
m_{(2,1)}(z_1,z_2,z_3)=m_{(2,1,0)}(z_1,z_2,z_3)&=z_1^2z_2+z_2^2z_1+z_1^2z_3+z_3^2z_1+z_2^2z_3+z_3^2z_2.
\end{align*}
The {\em Schur polynomials} $s_\lambda(z_1,\ldots,z_n)$ are most easily defined as a ratio of determinants of $n\times n$ matrices:
\begin{equation*}
s_\lambda(z_1,\ldots,z_n) = \det(z_i^{n-j+\lambda_j}) / \det(z_i^{n-j}) \in\ZZ[z_1,\ldots,z_n].
\end{equation*}
These polynomials have a rich theory that we will not discuss here. Given an integer partition $\lambda=(\lambda_1,\lambda_2,\ldots)$, the {\em conjugate partition} $\lambda'=(\lambda_1',\lambda_2',\ldots)$ is defined by $\lambda_i':=\#\{j: \lambda_j \geq i\}$, so that $m_i(\lambda)=\lambda_i'-\lambda_{i+1}'$. Note that $|\lambda|=|\lambda'|$. We have the following expansions for $h_k^{(b)}(z_1,\ldots,z_n)$ in terms of various bases for $\Lambda_n$, which follow easily from the theory of duality for symmetric functions (see Macdonald \cite[Chapter I.4]{macdonald}). None of these formulas is new, but we believe it is interesting to show that they all have the same proof. After the proof we will give references and discuss possible connections to Theorem \ref{thm:main}.

\begin{proposition}\label{prop:cauchy} Let $n,k,b\in\NN$ with $b\geq 2$ and let $\zeta$ be a primitive $b$th root of unity. Then the polynomial $h_k^{(b)}(z_1,\ldots,z_n)$ has the following expressions:
\begin{enumerate}
\item $\sum_{|\lambda|=k} (-1)^{-l(\lambda)} z_\lambda^{-1} p_\lambda(\zeta,\ldots,\zeta^{b-1}) p_\lambda(z_1,\ldots,z_n)$,
\item $(-1)^k \sum_{|\lambda|=k} e_\lambda(\zeta,\ldots,\zeta^{b-1}) m_\lambda(z_1,\ldots,z_n)$,
\item $(-1)^k \sum_{|\lambda|=k} m_\lambda(\zeta,\ldots,\zeta^{b-1}) e_\lambda(z_1,\ldots,z_n)$,
\item $(-1)^k \sum_{|\lambda|=k} s_{\lambda'}(\zeta,\ldots,\zeta^{b-1}) s_\lambda(z_1,\ldots,z_n)$.
\end{enumerate}
\end{proposition}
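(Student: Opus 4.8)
The plan is to prove all four identities at once from a single generating-function factorization together with the four standard forms of the dual Cauchy identity. First I would use the hypothesis that $\zeta$ is a primitive $b$th root of unity to factor each local factor of $H^{(b)}$: since $1-(z_it)^b=\prod_{j=0}^{b-1}(1-\zeta^j z_it)$, dividing out the $j=0$ factor gives
\[
1+z_it+\cdots+(z_it)^{b-1}=\frac{1-(z_it)^b}{1-z_it}=\prod_{j=1}^{b-1}(1-\zeta^j z_it).
\]
Taking the product over $i$ yields $H^{(b)}(t;z_1,\ldots,z_n)=\prod_{i=1}^n\prod_{j=1}^{b-1}(1-\zeta^j z_it)$. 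Replacing $t$ by $-t$ and recalling that $H^{(b)}(-t;z_1,\ldots,z_n)=\sum_k(-1)^k h_k^{(b)}(z_1,\ldots,z_n)\,t^k$, I obtain the key identity
\[
\prod_{i=1}^n\prod_{j=1}^{b-1}(1+\zeta^j z_it)=\sum_{k\ge 0}(-1)^k h_k^{(b)}(z_1,\ldots,z_n)\,t^k.
\]

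Next I would recognize the left-hand side as a dual Cauchy kernel. Writing the two alphabets $X=(\zeta t,\zeta^2 t,\ldots,\zeta^{b-1}t)$ and $Z=(z_1,\ldots,z_n)$, the product is exactly $\prod_{a,i}(1+X_aZ_i)$. The standard expansions of the dual Cauchy kernel (Macdonald I.4) then give
\[
\prod_{a,i}(1+X_aZ_i)=\sum_\lambda e_\lambda(X)m_\lambda(Z)=\sum_\lambda m_\lambda(X)e_\lambda(Z)=\sum_\lambda s_{\lambda'}(X)s_\lambda(Z)=\sum_\lambda \varepsilon_\lambda z_\lambda^{-1}p_\lambda(X)p_\lambda(Z),
\]
where $\varepsilon_\lambda=(-1)^{|\lambda|-l(\lambda)}$; the last equality is obtained by applying the standard involution $p_\lambda\mapsto\varepsilon_\lambda p_\lambda$, $s_\lambda\mapsto s_{\lambda'}$ to the $X$-variables of the ordinary Cauchy identity $\prod(1-X_aZ_i)^{-1}=\sum_\lambda z_\lambda^{-1}p_\lambda(X)p_\lambda(Z)$.

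Then I would extract the coefficient of $t^k$ from each expression. Since $e_\lambda$, $m_\lambda$, $s_{\lambda'}$ and $p_\lambda$ are each homogeneous of degree $|\lambda|$ (using $|\lambda'|=|\lambda|$ for the Schur term), substituting $X_a=\zeta^a t$ pulls out a factor $t^{|\lambda|}$, for instance $e_\lambda(X)=t^{|\lambda|}e_\lambda(\zeta,\ldots,\zeta^{b-1})$. Equating the coefficient of $t^k$ on both sides of the key identity and dividing by $(-1)^k$ yields parts (b), (c) and (d) verbatim. For part (a) the same step gives $h_k^{(b)}(z_1,\ldots,z_n)=(-1)^k\sum_{|\lambda|=k}\varepsilon_\lambda z_\lambda^{-1}p_\lambda(\zeta,\ldots,\zeta^{b-1})p_\lambda(z_1,\ldots,z_n)$, and I would simplify the sign using $|\lambda|=k$:
\[
(-1)^k\varepsilon_\lambda=(-1)^{k+|\lambda|-l(\lambda)}=(-1)^{2k-l(\lambda)}=(-1)^{-l(\lambda)},
\]
which matches the stated formula.

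The computation is otherwise routine; the only delicate point is the sign bookkeeping. One must be careful to invoke the dual Cauchy kernel $\prod(1+X_aZ_i)$ — which produces the conjugate $s_{\lambda'}$ and the elementary $e_\lambda$ on the $X$-side — rather than the ordinary kernel $\prod(1-X_aZ_i)^{-1}$, and to carry the global factor $(-1)^k$ arising from $H^{(b)}(-t;z_1,\ldots,z_n)$ through to the end. This factor is exactly what collapses the power-sum sign $\varepsilon_\lambda$ to the clean $(-1)^{-l(\lambda)}$ in part (a), so I expect reconciling that sign to be the main place where care is needed.
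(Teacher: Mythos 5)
Your proof is correct and takes essentially the same route as the paper's: both factor $[b]_{z_it}$ over the $b$th roots of unity, recognize the resulting product as the dual Cauchy kernel in the alphabets $(\zeta,\ldots,\zeta^{b-1})$ and $(z_1,\ldots,z_n)$, and extract the coefficient of $t^k$ from its four standard expansions. The only cosmetic difference is that you attach $t$ (and the sign, via $t\mapsto -t$) to the $\zeta$-alphabet, whereas the paper substitutes $y_j=-z_jt$ on the $z$-side; the sign bookkeeping, including the reduction $(-1)^k\varepsilon_\lambda=(-1)^{-l(\lambda)}$ in part (a), comes out identically.
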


\begin{proof}

For any variables $x_1,x_2,\ldots,y_1,y_2,\ldots$, the {\em dual Cauchy kernel} has the following expansions, where each sum is over all integer partitions $\lambda$ \cite[Equations 4.1,$'$, 4.2$'$, 4.3$'$]{macdonald}:
\begin{align*}
\prod_{i=1}^\infty \prod_{j=1}^{\infty}(1+x_iy_j) &= \sum_{\lambda} (-1)^{|\lambda|-l(\lambda)} z_\lambda^{-1}p_\lambda(x_1,x_2,\ldots)p_\lambda(y_1,y_2,\ldots)\\
&= \sum_{\lambda} e_\lambda(x_1,x_2,\ldots)m_\lambda(y_1,y_2,\ldots)\\
&= \sum_{\lambda} m_\lambda(x_1,x_2,\ldots)e_\lambda(y_1,y_2,\ldots)\\
&= \sum_{\lambda} s_{\lambda'}(x_1,x_2,\ldots)s_\lambda(y_1,y_2,\ldots),
\end{align*}
Now let $\zeta$ be a primitive $b$th root of unity. We can express the generating function for $h_k^{(b)}$ as
\begin{align*}
H^{(b)}(t;z_1,\ldots,z_n) &= \sum_k h_k^{(b)}(z_1,\ldots,z_n) t^k\\
 &= \prod_{j=1}^n (1+z_jt+\cdots+(z_jt)^{b-1}) \\
&= \prod_{i=1}^{b-1} \prod_{j=1}^{n} (1-\zeta^i z_jt).
\end{align*}
We can obtain this generating function by substituting
\begin{align*}
(x_1,x_2,\ldots) &= (\zeta,\ldots,\zeta^{b-1},0,0,\ldots),\\
(y_1,y_2,\ldots) &= (-z_1t,-z_2t,\ldots,-z_nt,0,0,\ldots),
\end{align*}
into the dual Cauchy kernel. Then to compute the coefficient of $t^k$ we use the fact that
\begin{equation*}
f_\lambda(-z_1t,-z_2t,\ldots,-z_nt) = (-t)^{|\lambda|} f_\lambda(z_1,\ldots,z_n)
\end{equation*}
for any homogeneous polynomial $f_\lambda$ of degree $|\lambda|$.
\end{proof}

Proposition \ref{prop:cauchy}a is equivalent to Proposition \ref{prop:hbpow} because Lemma \ref{lem:pow}c says
\begin{equation*}
p_\lambda(\zeta,\ldots,\zeta^{b-1})=(b-1)^{l_b(\lambda)}(-1)^{l(\lambda)-l_b(\lambda)}=(1-b)^{l_b(\lambda)}(-1)^{l(\lambda)}.
\end{equation*}
Recall from Theorem \ref{thm:main}c that
\begin{equation*}
e_k(\omega,\omega^2,\ldots,\omega^{b-1})=\begin{cases}
(-1)^k & k<b,\\
0 & k\geq b.
\end{cases}
\end{equation*}
It follows from this that
\begin{equation*}
e_\lambda(\zeta,\ldots,\zeta^{b-1}) = \begin{cases}
(-1)^{|\lambda|} & \lambda_i<b \text{ for all $i$},\\
0 & \text{else}.
\end{cases}
\end{equation*}
Thus Proposition \ref{prop:cauchy}b just says that $h_k^{(b)}(z_1,\ldots,z_n) = \sum_\lambda m_\lambda(z_1,\ldots,z_n)$, where the sum is over partitions $\lambda$ with $|\lambda|=k$ in which each part is less than $b$, which is obviously true.

The last two parts are more interesting. Proposition \ref{prop:cauchy}c appears in Doty and Walker \cite[Remark 3.10(3)]{dw}. We do not know a general formula for the integers $m_\lambda(\zeta,\ldots,\zeta^{b-1})$. Yamaguchi et al.~\cite{yamaguchi} give formulas for some special cases. If $\omega$ is a primitive $d$th root of unity then the specializations of Proposition \ref{prop:cauchy}c at $z_i\mapsto \omega^i$ and $z_i\mapsto \omega^{i-1}$ should involve an interesting mix of $b$th and $d$th roots of unity. 

The coefficients $s_{\lambda'}(\zeta,\ldots,\zeta^{b-1})$ in Proposition \ref{prop:cauchy}d were studied by Grinberg. His main result \cite[Corollary 2.9]{grinberg} says that
\begin{equation*}
h_k^{(b)}(z_1,\ldots,z_n) = \sum_{|\lambda|=k} \pet_b(\lambda) s_\lambda(z_1,\ldots,z_n),
\end{equation*}
where $\pet_b(\lambda)\in\{0,-1,1\}$ is the determinant of a certain $b\times b$ {\em Petrie matrix} that has entries in $\{0,1\}$ and is determined by $\lambda$. Cheng et al.~\cite[Theorem 1.3]{chen} gave an expression for $\pet_b(\lambda)$ in terms of the $b$-core and $b$-quotient of the partition $\lambda$. When $\omega$ is a primitive $d$th root of unity with $d|n$, Reiner, Stanton and White \cite[Proposition 4.3]{rsw} gave an expression for $s_\lambda(1,\omega,\ldots,\omega^{n-1})$ in terms of the $d$-core and $d$-quotient of $\lambda$. It would be interesting to see how these two formulas interact, particularly in the case $\gcd(b,d)=1$. Perhaps there is a nice connection to the theory of simultaneous core partitions.

\section*{Acknowledgements} The author thanks Seamus Albion, Darij Grinberg, Christian Krattenthaler, Daniel Provder,  Vic Reiner, Brendon Rhoades, Bruce Sagan and Nathan Williams for helpful conversations, and Heather Armstrong for help with \LaTeX.

\end{document}